\newcommand{\spec}{\mathrm{spec}}
\newcommand{\keywords}[1]{\par\smallskip\noindent\textbf{Keywords: }#1}
\newtheorem{definition}{Definition}
\newtheorem{proposition}{Proposition}
\newtheorem{remark}{Remark}
\newtheorem{theorem}{Theorem}[section]
\newtheorem{corollary}[theorem]{Corollary}
\begin{document}

\title{Generalizing Lattice Structures to Hypergraphs: Spectra of Clique and Hyperedge-based Laplacians}

\author{Eleonora Andreotti\\
Fondazione Bruno Kessler (FBK), Povo (TN), Italy\\
eandreotti@fbk.eu}
\date{}
\maketitle
\begin{abstract}
Lattice structures play a central role in spectral graph theory, offering analytical insight into diffusion, synchronization, and transport processes on regular discrete spaces. 
While their spectral properties are completely characterized in the classical graph setting, 
an extension to hypergraphs, where interactions involve more than two nodes, remains largely unexplored in the matrix-based formulation. 
In this work, we generalize the notion of a lattice to the hypergraph framework 
and study its Laplacian spectra under two alternative definitions: 
the \emph{clique Laplacian}, obtained through pairwise projection, 
and the \emph{hyperedge-based Laplacian}, defined via normalized hyperedge incidences. 
For both definitions, we derive the corresponding Laplacian matrices, 
analyze their eigenvalue spectra, and discuss how they reflect 
the underlying topological and dynamical structure of the hyperlattice. 
Our main result is a theorem giving a full spectral characterization in the periodic case, 
together with a Toeplitz-type open analogue whose spectrum retains a separable trigonometric structure. 
The obtained eigenvalues are expressed explicitly in terms of the hyperedge size, the number of directional 
families, and the lattice side length, thereby capturing how the geometry of higher-order interactions 
shapes the spectral structure.
\end{abstract}
\keywords{
Hypergraphs; 
Laplacian spectrum; 
Clique expansion; 
Regular lattices; 
Spectral graph theory; 
Ising model\\
MSC (2020): 05C50, 05C65, 15B05, 82B20.
}

\section{Introduction}
Hypergraphs generalize classical graphs by allowing hyperedges that 
connect an arbitrary number of vertices.
Formally, a hypergraph $\mathcal{H} = (V,\mathcal{E})$ consists of a finite
vertex set $V$ and a family of non-empty subsets
$\mathcal{E} \subseteq 2^V$, called hyperedges.
When $|e|=2$ for all $e \in \mathcal{E}$, one recovers the usual notion
of a graph.\\
Several mathematical formulations have been proposed to extend
spectral graph theory to this setting.
A first approach represents hypergraphs through
higher-order adjacency tensors, leading to spectral definitions based on
homogeneous polynomial eigenvalue equations
(see, e.g., Cooper and Dutle~\cite{CooperDutle2012},
Pearson and Zhang~\cite{PearsonZhang2014},
and Shao~\cite{Shao2013}).\\
In the matrix setting, one seeks to project the higher-order structure
of $\mathcal{H}$ onto a weighted graph that preserves, as much as possible,
the combinatorial or dynamical properties of the original hypergraph.
However, this projection is not unique:
different reduction schemes lead to distinct adjacency and Laplacian operators,
each emphasizing a particular aspect of the underlying structure.\\
When defining a diffusion or random-walk process on a hypergraph, for instance, 
it becomes necessary to specify how two nodes exchange the diffusing quantity through the hyperedges they share.  
\textcolor{black}{Unlike in simple graphs, the number of neighbors of a node does not coincide with the number of hyperedges it belongs to, and this leads to two natural interpretations of the Laplacian \cite{Mulas2022RandomWalks}: one based on pairwise adjacency and the other on hyperedge incidence.}
The first interpretation follows the idea of preserving pairwise connectivity, 
treating each hyperedge as a complete clique among its nodes.  
This viewpoint underlies the approach proposed by Carletti \emph{et al.}~\cite{Carletti2020RandomWalks}, 
where diffusion and random-walk dynamics are defined through such pairwise projections.  
In this framework, the Laplacian is constructed as in the classical graph case, with node degrees corresponding to the number of projected cliques incident to each node.  
Nevertheless, by treating every hyperedge as a clique, 
the intrinsic higher-order structure of the hypergraph is lost, 
and the node degree becomes a measure of pairwise adjacency rather than hyperedge membership.\\
A conceptually different approach, aimed at maintaining the genuine higher-order structure of the hypergraph, 
was introduced by Banerjee~\cite{Banerjee2021Spectrum}.  
Here, the focus shifts from pairwise relations to hyperedge-based interactions:  
the degree on the diagonal is defined as the number of hyperedges incident to each node, 
while the off-diagonal entries depend on the shared hyperedges between two nodes, 
weighted by the inverse of the hyperedge cardinality minus one,  
\[
A_{uv} = \sum_{e \ni u,v} \frac{1}{|e|-1}, \qquad L = D - A.
\]
The two definitions above provide complementary matrix formulations
of the hypergraph Laplacian.
Both preserve the key spectral properties of the graph Laplacian, such as
positive semidefiniteness and the correspondence between the multiplicity
of the zero eigenvalue and the number of connected components, 
while capturing different aspects of higher-order connectivity.
The clique-based operator emphasizes pairwise adjacency among nodes,
whereas the hyperedge-based operator encodes interactions mediated
through shared hyperedge membership.
\textcolor{black}{A discussion of how the clique-expanded and hyperedge-based Laplacians arise as specific instances of broader classes of operators associated with random walks on hypergraphs, together with conditions under which the resulting dynamics coincide, is provided by Mulas et al.~\cite{Mulas2022RandomWalks}}.\\

Since in hypergraphs the definition of adjacency and Laplacian operators requires modelling choices 
that do not arise in ordinary graphs, even highly symmetric hypergraphs may exhibit structural and 
dynamical behaviours that vary depending on the chosen formulation. \\
Lattice graphs are the paradigmatic example of a regular, translation-invariant interaction structure. 
The mathematical and physical importance of lattices rests on two features that are tightly coupled in 
the graph setting: regularity of the degree and locality of interactions. Each node in the classical 
$\mathbb{Z}^m$ lattice is adjacent to its $2m$ nearest neighbours, and this homogeneous, pairwise structure 
underpins both its spectral tractability and its role as a reference model for more general systems.
Their symmetry makes them analytically tractable: Laplacian spectra admit closed-form expressions 
via discrete Fourier analysis~\cite{Chung1997Spectral,Merris1994Laplacian,MoharWoess1989Spectra}, and many classical results in spectral graph theory and statistical 
physics are derived precisely in this setting. In particular, lattice models underpin the study of 
diffusion, synchronization, and interacting-particle systems such as the Ising and Potts models~\cite{Ising1925,Onsager1944,Spohn1991},
providing a clean reference case against which more general behaviours can be compared.\\
This naturally raises the problem of constructing a hypergraph analogue of the classical lattice, 
one that preserves translational symmetry while allowing for multiway interactions.
Developing such a structure would provide a higher-order counterpart to the role played by 
$\mathbb{Z}^m$ in graph theory, offering a tractable model for studying diffusion and dynamics 
in systems where interactions involve more than two nodes.\\
Despite their central role in the spectral analysis of graphs,
lattice structures have not yet been systematically generalized to
the hypergraph setting.
The notion of regularity, which in graphs is defined by the node degree,
and that of locality, associated with pairwise adjacency, become
non-trivial when interactions involve more than two nodes.
In a hypergraph, each node may participate in multiple hyperedges of
different cardinalities, and the number of neighboring nodes no longer
coincides with the number of incident hyperedges.
As a consequence, extending the concept of a lattice to hypergraphs
requires specifying how hyperedges are distributed and how they overlap
in a way that preserves translational symmetry along each coordinate
direction.\\

This naturally arises the construction of a hypergraph analogue of the classical lattice. 
In the present work, we introduce such a family of hyperlattices on the discrete torus, 
define their clique-based and hyperedge-based Laplacians, and derive explicit spectral 
expressions for both periodic and open-boundary realizations. In doing so, we obtain a 
higher-order counterpart of classical lattice models whose Laplacian spectra remain 
fully tractable under either boundary condition.

\medskip
\noindent

\textbf{Outline of the paper.}
Section~\ref{sec:laplacians} recalls the definitions of adjacency, degree, and Laplacian operators for hypergraphs, emphasizing the distinction between the clique-based and hyperedge-based formulations, together with their normalized counterparts.\\
Section~\ref{sec:lattice} revisits classical lattice graphs and the
$\ell$-lattice hypergraph introduced by Andreotti and Mulas~\cite{AndreottiMulas2022Signless},
highlighting how row/column hyperedges induce natural matrix formulations.\\
Section~\ref{sec:llattice} derives the spectra of the clique
and hyperedge-based Laplacians for the $\ell$-lattice hypergraph.
This provides a first concrete comparison between the two operators on
a regular higher-order structure with uniform hyperedge cardinality.\\
Section~\ref{sec:2mfamhyper} introduces a broader class of hyperlattices obtained by combining 
$m$ directional families of hyperedges on the discrete torus $\mathbb{Z}_\ell^m$, one for each 
coordinate axis. This construction generalizes the $\ell$-lattice in $\mathbb{Z}_\ell^2$ 
considered in Section~\ref{sec:lattice}, and it admits a closed-form spectral characterization 
under both Laplacian definitions.\\
Section~\ref{sec:2mrhyper} further extends the construction to
hyperlattices on $\mathbb{Z}_\ell^m$ with hyperedges of arbitrary cardinality
$r$, decoupling the interaction range from the lattice size.
For these models, we derive explicit formulas for the spectra of the
clique-based and hyperedge-based Laplacians.
We also compare periodic and open-boundary realizations, and introduce
a Toeplitz-type open hyperlattice whose spectrum admits a separable, 
fully explicit representation.
This section contains our main spectral results, expressing the eigenvalues
in terms of the lattice side length, the dimension, and the hyperedge size.\\
\textcolor{black}{Finally, Section~\ref{sec:ising} collects the main implications of our results, connects them with higher-order Ising models on hypergraphs, and outlines directions for future research.}

\medskip
\section{Laplacian operators on hypergraphs}
\label{sec:laplacians}

In this section we recall the Laplacian formulations that will be employed
throughout the paper.
Let $\mathcal{H} = (V,H)$ be a finite, undirected hypergraph
with node set $V = \{1,\dots,n\}$ and hyperedge family
$H = \{e_1,\dots,e_m\}$, where each $e_j \subseteq V$.
We denote by $|e_j|$ the cardinality of the $j$-th hyperedge.

\subsection{Adjacency and degree matrices}

For any pair of nodes $u,v \in V$, we define the \emph{pairwise weight}
\[
A_{uv} =
\begin{cases}
  0, & u = v,\\[4pt]
  \displaystyle
  \sum_{e \in H \,:\, u,v \in e} w_e \, \omega_{uv}(e),
  & u \neq v,
\end{cases}
\]
where $w_e > 0$ is the weight assigned to the hyperedge $e$ and
$\omega_{uv}(e)$ specifies how the contribution of $e$ is distributed
among its node pairs.
The degree of node $u$ is then $d_u = \sum_{v} A_{uv}$,
and the degree matrix is $D = \mathrm{diag}(d_1,\dots,d_n)$.\\
Different definitions of $\omega_{uv}(e)$ lead to distinct Laplacian operators,
each capturing a different notion of adjacency within the hypergraph.

\subsection{Clique Laplacian}

The \emph{clique Laplacian} is obtained by projecting each hyperedge onto
a complete clique among its nodes, assigning unit weight to every pair:
\[
\omega_{uv}^{(\mathrm{cl})}(e) = 1
\qquad \forall\, u,v \in e,~u\neq v.
\]
The corresponding adjacency and Laplacian matrices are
\[
A_{\mathrm{cl}}(u,v) = \sum_{e \ni u,v} w_e, 
\qquad
L_{\mathrm{cl}} = D_{\mathrm{cl}} - A_{\mathrm{cl}}.
\]
The degree of node~$u$ in this projection, denoted $d_u^{(\mathrm{cl})}$, is
\[
d_u^{(\mathrm{cl})}
= \sum_{v \neq u} A_{\mathrm{cl}}(u,v)
= \sum_{e \ni u} w_e \, (|e|-1).
\]
Hence, in the clique-based formulation each hyperedge~$e$
contributes $(|e|-1)$ to the degree of every node it contains.
The degree matrix is
\[
D_{\mathrm{cl}} = \mathrm{diag}\!\left(d_1^{(\mathrm{cl})}, \dots, d_n^{(\mathrm{cl})}\right).
\]
When all hyperedges have equal weight $w_e = 1$, 
the degree of node~$u$ equals the number of other nodes
that share at least one hyperedge with~$u$.
This corresponds to the ``clique expansion'' introduced by
Carletti \emph{et al.}~\cite{Carletti2020RandomWalks}.
\medskip

\subsection{Hyperedge-based Laplacian}\label{subsec:hyperlap}

Alternatively, one may preserve the hyperedge structure explicitly by
distributing the contribution of each hyperedge uniformly among its pairs.
Following Banerjee~\cite{Banerjee2021Spectrum},
we define
\[
\omega_{uv}^{(\mathrm{h})}(e) = \frac{1}{|e|-1},
\]
so that the total contribution of each hyperedge~$e$
to the degree of any node it contains equals~$w_e$.
The resulting adjacency and Laplacian matrices are
\[
A_{\mathrm{h}}(u,v) = \sum_{e \ni u,v} \frac{w_e}{|e|-1},
\qquad
L_{\mathrm{h}} = D_{\mathrm{h}} - A_{\mathrm{h}}.
\]

The associated node degree, denoted $d_u^{(\mathrm{h})}$, is
\[
d_u^{(\mathrm{h})}
= \sum_{v \neq u} A_{\mathrm{h}}(u,v)
= \sum_{e \ni u} w_e.
\]
Thus, in the hyperedge-based Laplacian each hyperedge
contributes one unit to the degree of every node it contains,
independently of its size.
The degree matrix is
\[
D_{\mathrm{h}} = \mathrm{diag}\!\left(d_1^{(\mathrm{h})}, \dots, d_n^{(\mathrm{h})}\right).
\]
This definition preserves the key spectral properties of graph Laplacians:
it is symmetric, positive semidefinite, and the multiplicity of the zero
eigenvalue equals the number of connected components of~$\mathcal{H}$.

\subsection{Normalized Laplacians}

For both definitions, one can introduce normalized versions analogous
to those used in spectral graph theory.
Let $L$ denote either $L_{\mathrm{cl}}$ or $L_{\mathrm{h}}$, 
and let $D$ be the corresponding degree matrix.
We define:
\begin{align*}
L_{\mathrm{rw}} &= D^{-1} L = I - D^{-1}A, 
&&\text{(random-walk Laplacian)},\\[4pt]
L_{\mathrm{sym}} &= D^{-1/2} L D^{-1/2} = I - D^{-1/2} A D^{-1/2}, 
&&\text{(symmetric normalized Laplacian)}.
\end{align*}
Both operators are positive semidefinite and share the same spectrum
up to similarity.
The random-walk version naturally describes discrete diffusion processes
and stationary probabilities, whereas the symmetric form is more convenient
for spectral analysis.

\begin{remark}
In regular hypergraphs where all nodes have equal degree $d$,
the normalized Laplacians simplify to
\[
L_{\mathrm{rw}} = \frac{1}{d} L,
\qquad
L_{\mathrm{sym}} = \frac{1}{d} L,
\]
and hence coincide (up to scaling) with the combinatorial Laplacian.
\end{remark}
\begin{proposition}[Fundamental spectral properties]
\label{prop:spectral_properties}
Let $\mathcal{H} = (V,H)$ be a connected, undirected, weighted hypergraph,
and let $L$ denote either the clique Laplacian $L_{\mathrm{cl}}$ 
or the hedge Laplacian $L_{\mathrm{h}}$, as defined above.
Then:
\begin{enumerate}
  \item $L$ is symmetric and positive semidefinite.
  \item The smallest eigenvalue of $L$ is $\lambda_0 = 0$,
        with associated eigenvector $\mathbf{1} = (1,\dots,1)^\top$.
  \item The multiplicity of the zero eigenvalue equals the number
        of connected components of $\mathcal{H}$.
  \item All eigenvalues of the normalized Laplacians
        $L_{\mathrm{rw}} = I - D^{-1}A$ and
        $L_{\mathrm{sym}} = I - D^{-1/2} A D^{-1/2}$
        are real and lie in the interval $[0,2]$.
  \item If $\mathcal{H}$ is $d$-regular (i.e., $d_u = d$ for all $u \in V$),
        then the spectra of $L$, $L_{\mathrm{rw}}$ and $L_{\mathrm{sym}}$
        are identical up to the scaling factor $1/d$.
\end{enumerate}
\end{proposition}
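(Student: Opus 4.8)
The plan is to derive all five statements from a single unifying computation: an edge-wise decomposition of the quadratic form associated with $L$. In both conventions one has $L = D - A$ where $A_{uv} = \sum_{e \ni u,v} c_e$ with $c_e > 0$ a per-hyperedge constant — $c_e = w_e$ in the clique case, $c_e = w_e/(|e|-1)$ in the hyperedge-based case — and, crucially, the per-hyperedge contribution to the degree is $d_u = \sum_{e \ni u} c_e\,(|e|-1)$ in both cases, so that $d_u = \sum_v A_{uv}$. Item (1): symmetry of $L$ is immediate from symmetry of $A$ and diagonality of $D$. For positive semidefiniteness I would expand $x^\top L x = \sum_u d_u x_u^2 - \sum_{u \neq v} A_{uv} x_u x_v$, substitute the definitions, and regroup by hyperedge; using the identity $(|e|-1)\sum_{u \in e} x_u^2 = \sum_{\{u,v\} \subseteq e}(x_u^2 + x_v^2)$ one gets
\[
x^\top L x \;=\; \sum_{e \in H} c_e \sum_{\{u,v\} \subseteq e} (x_u - x_v)^2 \;\ge\; 0 .
\]

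Items (2) and (3) follow from this identity. For (2), $(A\mathbf{1})_u = \sum_v A_{uv} = d_u$, so $L\mathbf{1} = D\mathbf{1} - A\mathbf{1} = 0$; together with (1), $0$ is the smallest eigenvalue. For (3), since every $c_e > 0$, the equality $x^\top L x = 0$ forces $x_u = x_v$ for all $u,v$ contained in a common hyperedge, hence $x$ is constant on each connected component of $\mathcal{H}$ (nodes being connected when joined by a chain of hyperedges); conversely the indicator vector of each component lies in $\ker L$, and these indicators are linearly independent, so $\dim \ker L$ equals the number of connected components.

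For item (4), observe that $L_{\mathrm{sym}} = D^{-1/2} L D^{-1/2}$ is a congruence of the PSD matrix $L$, hence PSD, giving the lower bound $0$; and $L_{\mathrm{rw}} = D^{-1} L = D^{-1/2} L_{\mathrm{sym}} D^{1/2}$ is similar to $L_{\mathrm{sym}}$, so the two share the same (real) spectrum. For the upper bound I would rerun the edge-wise computation for the signless combination $D + A$, obtaining $x^\top (D+A) x = \sum_{e \in H} c_e \sum_{\{u,v\} \subseteq e}(x_u + x_v)^2 \ge 0$; since $D^{-1/2}(D+A)D^{-1/2} = 2I - L_{\mathrm{sym}}$, congruence gives $2I - L_{\mathrm{sym}} \succeq 0$, so all eigenvalues of $L_{\mathrm{sym}}$ — and hence of $L_{\mathrm{rw}}$ — lie in $[0,2]$. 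Item (5) is then immediate from the definitions: if $D = dI$ then $L_{\mathrm{rw}} = d^{-1} L$ and $L_{\mathrm{sym}} = d^{-1/2} L d^{-1/2} = d^{-1} L$ verbatim, so all three spectra agree up to the factor $1/d$.

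The only step that is not pure bookkeeping is verifying the two regrouping identities for $x^\top L x$ and $x^\top (D+A) x$: one must check that the diagonal sum $\sum_u d_u x_u^2$ collapses exactly onto $\sum_{e} c_e \sum_{\{u,v\} \subseteq e}(x_u^2 + x_v^2)$, which is precisely where the common normalization $d_u = \sum_{e \ni u} c_e(|e|-1)$ of the two Laplacian conventions enters. Once those identities are in hand, the remaining arguments — symmetry, the kernel dimension count, the $[0,2]$ localization via congruence and similarity, and the regular case — are each one or two lines.
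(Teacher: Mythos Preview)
Your proof is correct and, for items (1)--(3) and (5), follows the same line as the paper's sketch: the quadratic form identity, the row-sum observation $L\mathbf{1}=0$, the component-wise kernel analysis, and the substitution $D=dI$ in the regular case. Your hyperedge-by-hyperedge regrouping is simply the paper's pairwise identity $x^\top L x=\tfrac12\sum_{u,v}A_{uv}(x_u-x_v)^2$ expanded via $A_{uv}=\sum_{e\ni u,v}c_e$, so the two decompositions coincide. The one substantive difference is item~(4): the paper cites Banerjee for the inclusion $\sigma(L_{\mathrm{sym}})\subseteq[0,2]$, whereas you give a self-contained argument via the signless form $x^\top(D+A)x=\sum_e c_e\sum_{\{u,v\}\subseteq e}(x_u+x_v)^2\ge 0$ and the congruence $D^{-1/2}(D+A)D^{-1/2}=2I-L_{\mathrm{sym}}$. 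This buys you a fully in-house proof that needs no external reference; the paper's route is merely more economical in a survey-style proposition.
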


\begin{proof}[Sketch of the proof]
Symmetry and positive semidefiniteness follow from the quadratic form
\[
x^\top L x
= \frac{1}{2} \sum_{u,v} A_{uv} (x_u - x_v)^2 \ge 0,
\]
which holds for both Laplacian definitions since $A_{uv} \ge 0$ and $A_{uv}=A_{vu}$.
The constant vector $\mathbf{1}$ is always in the kernel of $L$
because each row of $L$ sums to zero.
Connectivity arguments identical to those for graph Laplacians
ensure that the multiplicity of the zero eigenvalue equals
the number of connected components.
For the normalized operators, $L_{\mathrm{sym}}$ is similar to
$L_{\mathrm{rw}}$ via the transformation
$L_{\mathrm{sym}} = D^{1/2} L_{\mathrm{rw}} D^{-1/2}$,
which implies that they share the same spectrum,
and all eigenvalues lie in $[0,2]$ as shown in
Banerjee~\cite{Banerjee2021Spectrum}.
Finally, in the regular case $D = d I$, so $L_{\mathrm{rw}} = L_{\mathrm{sym}} = L/d$.
\end{proof}

\section{The lattice in graph and hypergraph theory}\label{sec:lattice}

The lattice is among the most fundamental and extensively studied graph structures.
Its regularity, symmetry, and spatial embedding make it a canonical model 
for diffusion, vibration, and transport phenomena on networks.
In the classical graph setting, the $l\times l$ lattice (or grid graph)
is defined as the Cartesian product of two path graphs $P_l \square P_l$, 
where each node is connected to its nearest horizontal and vertical neighbors.
The corresponding Laplacian spectrum is known in closed form and can be expressed as
\[
\lambda_{k,h} = 2\!\left(2 - \cos\frac{\pi k}{l+1} - \cos\frac{\pi h}{l+1}\right),
\qquad k,h = 1,\dots,l,
\]
for the open lattice (with free boundaries),
and
\[
\lambda_{k,h} = 4 - 2\cos\frac{2\pi k}{l} - 2\cos\frac{2\pi h}{l},
\qquad k,h = 0,\dots,l-1,
\]
for the periodic (toroidal) lattice.
These eigenvalues describe, respectively, the discrete normal modes 
of a vibrating membrane or the steady states of a diffusion process on a regular grid.
From a physical perspective, the lattice Laplacian captures
how local interactions between neighboring sites 
give rise to collective behavior such as spatial synchronization,
energy dispersion, or diffusive relaxation.

\medskip

While the graph lattice has been thoroughly characterized,
its generalization to hypergraphs is less well understood.
In \cite{AndreottiMulas2022Signless},
Andreotti and Mulas introduced the concept of an $l$-lattice hypergraph
and analytically studied its spectrum.
In particular, they considered the \emph{signless normalized Laplacian}
operator, and derived its eigenvalues in closed form
(Proposition~5.11 in~\cite{AndreottiMulas2022Signless}).
Given $l \in \mathbb{N}$, $l \ge 2$, they define the $l$-lattice
as the hypergraph $\mathcal{H} = (V,H)$ on $l^2$ nodes and $2l$ hyperedges
that can be represented as an $l \times l$ grid, where
each hyperedge corresponds to a full row or a full column of nodes.
In this construction, every node belongs to exactly two hyperedges:
one horizontal and one vertical.

\medskip

However, just as the notion of the Laplacian on hypergraphs
admits multiple generalizations, depending on whether node degrees
are defined through pairwise adjacencies or hyperedge incidences,
the lattice structure itself can be extended according to different modeling choices.
One may define adjacency based on node co-membership within the same hyperedge
(clique expansion), or instead based on shared hyperedges with appropriate normalization.
These alternative formulations yield distinct Laplacian operators,
each preserving specific structural or dynamical properties of the hyperlattice.

\section{Spectrum of the hypergraph $l$-lattice}\label{sec:llattice}

We now consider the $l$-lattice hypergraph introduced in
Andreotti and Mulas~\cite{AndreottiMulas2022Signless}.
Recall that it is the hypergraph $\mathcal{H}=(V,H)$ on $l^2$ nodes
arranged in an $l\times l$ grid, with exactly $2l$ hyperedges:
the $l$ rows and the $l$ columns of the grid.
Every node belongs to exactly two hyperedges, one horizontal and one vertical,
and all hyperedges have the same cardinality $l$ (see Fig.~\ref{fig:lattice5} for $l=5$).

In this setting, two natural Laplacian operators arise.
\begin{figure}[ht]
    \centering
    \includegraphics[width=.6\linewidth]{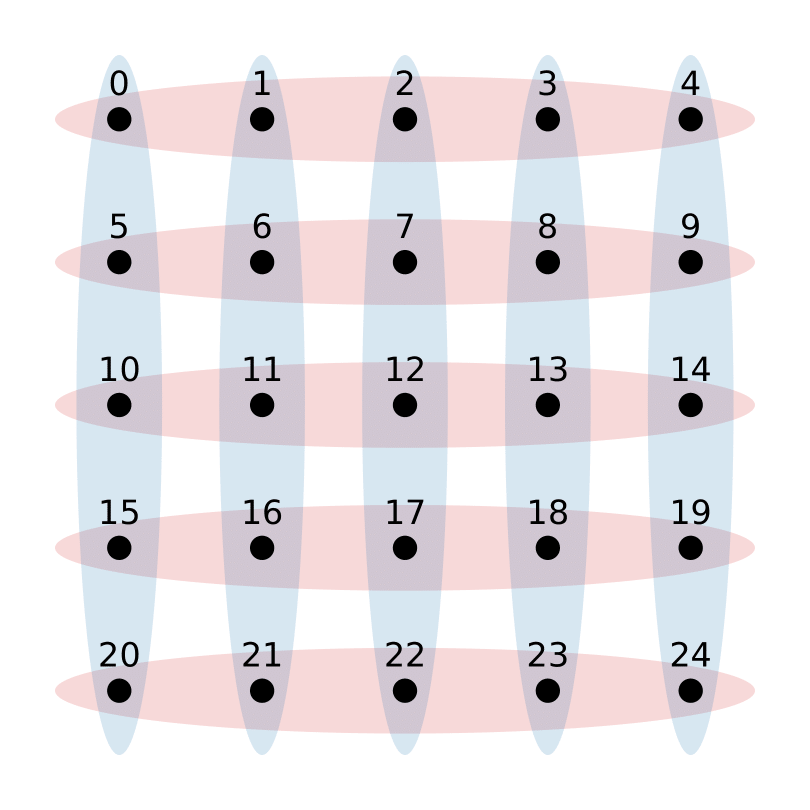}
    \caption{The $5$-lattice hypergraph $\mathcal{H}=(V,H)$.
  nodes are arranged on a $5\times5$ grid.
  Each horizontal (red) and vertical (blue) region 
  represents a hyperedge containing all nodes in a row or column, respectively.
  Every node belongs to exactly two hyperedges: one horizontal and one vertical.
 }
    \label{fig:lattice5}
\end{figure}
\subsection{Clique Laplacian}

The first operator is obtained by clique expansion:
each hyperedge is replaced by a complete graph on its $l$ nodes,
with unit weight on every pair.
Enumerating the nodes as $(r,c)$ with $r,c=0,\dots,l-1$,
this produces exactly the graph on the $l\times l$ grid
in which two nodes are adjacent if and only if they lie in the same row
or in the same column.
In matrix form, the adjacency can be written as
\[
A_{\mathrm{cl}} = (J_l - I_l)\otimes I_l \;+\; I_l \otimes (J_l - I_l),
\]
where $J_l$ is the $l\times l$ all-ones matrix and $I_l$ the identity.
Since each node is adjacent to $(l-1)$ nodes in its row
and to $(l-1)$ nodes in its column, the degree is constant and equal to
$2(l-1)$.
Therefore the clique Laplacian is
\[
L_{\mathrm{cl}} = 2(l-1) I_{l^2} - A_{\mathrm{cl}}.
\]

\begin{proposition}[Spectrum of the clique Laplacian on the $l$-lattice]
The eigenvalues of $L_{\mathrm{cl}}$ are
\[
\lambda_0 = 0 \quad (\text{multiplicity } 1), \qquad
\lambda_1 = l \quad (\text{multiplicity } 2(l-1)), \qquad
\lambda_2 = 2l \quad (\text{multiplicity } (l-1)^2).
\]
\end{proposition}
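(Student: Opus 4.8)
The plan is to diagonalize $A_{\mathrm{cl}}$ by exploiting its Kronecker-sum structure and then read off the spectrum of $L_{\mathrm{cl}} = 2(l-1)I_{l^2} - A_{\mathrm{cl}}$ by an affine shift. First I would recall that the all-ones matrix $J_l$ has eigenvalue $l$ on the constant vector $\mathbf{1}_l$ and eigenvalue $0$ with multiplicity $l-1$ on $\mathbf{1}_l^{\perp}$; hence $J_l - I_l$ has eigenvalue $l-1$ (once) and eigenvalue $-1$ (multiplicity $l-1$), with the same eigenvectors. Writing
\[
A_{\mathrm{cl}} = (J_l - I_l)\otimes I_l \;+\; I_l \otimes (J_l - I_l)
\]
exhibits $A_{\mathrm{cl}}$ as the Kronecker sum $(J_l - I_l)\oplus(J_l - I_l)$, so its eigenvalues are exactly the pairwise sums $\mu_i + \mu_j$ of eigenvalues of the two factors, with eigenvectors the tensor products $v_i \otimes v_j$ of the corresponding factor eigenvectors. (A small preliminary check is that this matrix indeed realizes the ``same row or same column'' adjacency on the grid under the labelling $(r,c)\mapsto rl+c$.)

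Next I would enumerate the three possible sums. The combination $(l-1)+(l-1)=2l-2$ occurs with multiplicity $1$; the two combinations $(l-1)+(-1)$ and $(-1)+(l-1)$ both equal $l-2$ and together contribute multiplicity $(l-1)+(l-1)=2(l-1)$; and $(-1)+(-1)=-2$ occurs with multiplicity $(l-1)^2$. As a consistency check, $1 + 2(l-1) + (l-1)^2 = \bigl(1+(l-1)\bigr)^2 = l^2$, accounting for all eigenvalues of $A_{\mathrm{cl}}$.

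Finally, since $L_{\mathrm{cl}} = 2(l-1)I_{l^2} - A_{\mathrm{cl}}$ is an affine function of $A_{\mathrm{cl}}$, it has the same eigenvectors and its eigenvalues are $2(l-1)-\mu$ for each eigenvalue $\mu$ of $A_{\mathrm{cl}}$. This yields $2(l-1)-(2l-2)=0$ with multiplicity $1$, $2(l-1)-(l-2)=l$ with multiplicity $2(l-1)$, and $2(l-1)-(-2)=2l$ with multiplicity $(l-1)^2$, which is the claimed spectrum. The simple zero eigenvalue is moreover consistent with Proposition~\ref{prop:spectral_properties}, as the $l$-lattice is connected. There is no real obstacle here; the only points requiring care are stating the Kronecker-sum eigenvalue fact cleanly and verifying the matrix identity for $A_{\mathrm{cl}}$ against the combinatorial description of adjacency.
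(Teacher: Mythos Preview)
Your argument is correct and follows essentially the same route as the paper: identify the eigenvalues of $J_l - I_l$, use the Kronecker-sum structure of $A_{\mathrm{cl}}$ to obtain all pairwise sums with the appropriate multiplicities, and then apply the affine shift $\lambda \mapsto 2(l-1)-\lambda$. The additional sanity checks you include (the multiplicity count summing to $l^2$ and the connectedness remark) are fine embellishments but not needed for the argument.
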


\begin{proof}[Sketch of the proof]
The matrix $J_l - I_l$ has eigenvalues $l-1$ (with multiplicity $1$)
and $-1$ (with multiplicity $l-1$).
Hence, by standard properties of Kronecker sums, the adjacency matrix
\(
A_{\mathrm{cl}} = (J_l - I_l)\otimes I_l + I_l\otimes (J_l - I_l)
\)
has eigenvalues given by all possible sums
\[
(l-1) + (l-1) = 2(l-1) \quad (1\text{ time}),
\]
\[
(l-1) + (-1) = l-2 \quad (l-1\text{ times}),
\]
\[
(-1) + (l-1) = l-2 \quad (l-1\text{ times}),
\]
\[
(-1) + (-1) = -2 \quad ((l-1)^2 \text{ times}).
\]
Since $L_{\mathrm{cl}} = 2(l-1)I - A_{\mathrm{cl}}$,
the eigenvalues of $L_{\mathrm{cl}}$ are obtained by subtracting the
above values from $2(l-1)$, which yields the statement.
\end{proof}

\subsection{Hyperedge-based Laplacian}

A second, hyperedge-based operator can be defined as follows.
Given a hyperedge $e$ of cardinality $l$ and unit weight,
every pair of nodes in $e$ receives weight $1/(l-1)$,
so that the total contribution of $e$ to the degree of each of its nodes is $1$.
Since in the $l$-lattice each node belongs to exactly two hyperedges,
the degree is identically $2$.
Accordingly, the hedge Laplacian is
\[
L_{\mathrm{h}} = 2 I_{l^2} - \frac{1}{l-1} A_{\mathrm{cl}}.
\]

\begin{proposition}[Spectrum of the hyperedge-based Laplacian on the $l$-lattice]
The eigenvalues of $L_{\mathrm{h}}$ are
\[
\mu_0 = 0 \quad (\text{multiplicity } 1), \qquad
\mu_1 = \frac{l}{l-1} \quad (\text{multiplicity } 2(l-1)), \qquad
\mu_2 = \frac{2l}{l-1} \quad (\text{multiplicity } (l-1)^2).
\]
\end{proposition}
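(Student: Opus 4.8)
The plan is to reduce the statement to the spectrum of $A_{\mathrm{cl}}$ already computed in the proof of the preceding proposition (spectrum of the clique Laplacian on the $l$-lattice), exploiting the fact that on the $l$-lattice the hyperedge-based Laplacian is an affine function of that same matrix. Concretely, since every hyperedge has cardinality $l$ and every node lies in exactly two hyperedges, the hyperedge-based adjacency is $A_{\mathrm{h}} = \tfrac{1}{l-1}A_{\mathrm{cl}}$ and the degree matrix is $D_{\mathrm{h}} = 2 I_{l^2}$, so
\[
L_{\mathrm{h}} = 2 I_{l^2} - \frac{1}{l-1}\, A_{\mathrm{cl}}
= \frac{1}{l-1}\bigl(2(l-1) I_{l^2} - A_{\mathrm{cl}}\bigr)
= \frac{1}{l-1}\, L_{\mathrm{cl}}.
\]
Hence $L_{\mathrm{h}}$ and $L_{\mathrm{cl}}$ are simultaneously diagonalizable, and their eigenvalues (with the same eigenvectors and multiplicities) differ only by the factor $1/(l-1)$.

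First I would record the eigenvalues of $A_{\mathrm{cl}} = (J_l - I_l)\otimes I_l + I_l\otimes(J_l - I_l)$: since $J_l - I_l$ has spectrum $\{l-1\}$ (multiplicity $1$) and $\{-1\}$ (multiplicity $l-1$), the Kronecker-sum structure gives eigenvalues $2(l-1)$ with multiplicity $1$, $l-2$ with multiplicity $2(l-1)$, and $-2$ with multiplicity $(l-1)^2$, exactly as in the clique-Laplacian argument. Then I would apply the map $\alpha \mapsto 2 - \alpha/(l-1)$ to each of these values, obtaining
\[
2 - \frac{2(l-1)}{l-1} = 0, \qquad
2 - \frac{l-2}{l-1} = \frac{l}{l-1}, \qquad
2 + \frac{2}{l-1} = \frac{2l}{l-1},
\]
with the multiplicities $1$, $2(l-1)$, $(l-1)^2$ carried over unchanged. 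This is precisely $\mu_0 = 0$, $\mu_1 = l/(l-1)$, $\mu_2 = 2l/(l-1)$, and the count $1 + 2(l-1) + (l-1)^2 = l^2$ confirms that all eigenvalues have been accounted for. Equivalently, one may simply divide the eigenvalues $0, l, 2l$ of the clique Laplacian by $l-1$.

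The computation is entirely routine; the only point deserving a line of justification is the identity $L_{\mathrm{h}} = \tfrac{1}{l-1} L_{\mathrm{cl}}$, which is what makes the two operators spectrally equivalent on this example. It holds precisely because the $l$-lattice is uniform (all hyperedges of common size $l$) and both induced degree matrices are scalar multiples of the identity, so there is no genuine obstacle here. The more substantial work — where the clique-based and hyperedge-based Laplacians genuinely diverge — arises once hyperedges of different cardinalities, or differing numbers of incident hyperedges per node, are combined, which is the subject of the later sections.
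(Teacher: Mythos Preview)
Your proof is correct and matches the paper's own argument essentially line for line: both use $L_{\mathrm{h}} = 2I - \tfrac{1}{l-1}A_{\mathrm{cl}}$ and apply the affine map $\alpha \mapsto 2 - \alpha/(l-1)$ to the three eigenvalues of $A_{\mathrm{cl}}$ with their multiplicities. Your additional observation that $L_{\mathrm{h}} = \tfrac{1}{l-1}L_{\mathrm{cl}}$ is a nice rephrasing (and the paper makes this same point later for the general $\mathbb{Z}_\ell^m$ case).
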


\begin{proof}[Sketch of the proof]
Let $\alpha$ be an eigenvalue of $A_{\mathrm{cl}}$.
Then $\lambda = 2 - \frac{1}{l-1}\alpha$ is an eigenvalue of
$L_{\mathrm{h}} = 2I - \frac{1}{l-1}A_{\mathrm{cl}}$
with the same multiplicity.
Substituting the three values $\alpha \in \{2(l-1),\, l-2,\, -2\}$
with their respective multiplicities yields the formula.
\end{proof}
\begin{remark}[Comparison with the classical lattice graph]
Notice that the clique Laplacian on the $l$-lattice hypergraph behaves spectrally
like a ``rank--two'' perturbation of the classical 2D lattice graph.
In the graph case (grid with only 4-neighbourhood and no hyperedges),
the Laplacian eigenvalues form a two-parameter family of trigonometric terms,
depending on the horizontal and vertical frequencies.
In the hypergraph case considered here, however, each row and each column
induces a complete interaction among its nodes, which collapses
the whole spectrum into only three distinct eigenvalues,
\[
\sigma(L_{\mathrm{cl}}) = \{\,0,\; l,\; 2l\,\},
\]
with multiplicities $1$, $2(l-1)$, and $(l-1)^2$, respectively.
This shows that the hyperedge-induced coupling is much stronger than
simple nearest-neighbour coupling, and it rigidifies the system
to a finite set of collective modes.
\end{remark}

\begin{remark}[Effect of hyperedge normalization]
The hyperedge Laplacian $L_{\mathrm{h}}$ differs from the clique Laplacian
only by a uniform rescaling of the pairwise weights inside each hyperedge.
Yet this rescaling is not innocuous: it changes the degree from $2(l-1)$
to the constant value $2$, and correspondingly shifts the two nonzero
eigenvalues from $\{l, 2l\}$ to
\[
\left\{\frac{l}{l-1}, \, \frac{2l}{l-1}\right\}.
\]
In other words, the hyperedge Laplacian preserves the combinatorial
information ``how many hyperedges contain a node''
rather than ``how many nodes are pairwise adjacent to it''.
This illustrates, on a minimal hypergraph where all hyperedges have
the same size, that the two most common Laplacian constructions for
hypergraphs encode genuinely different diffusion scales.
\end{remark}

\section{Hyperlattices with multi-directional families}
\label{sec:2mfamhyper}
As discussed in the previous section, 
the $l$-lattice hypergraph introduced by 
Andreotti and Mulas~\cite{AndreottiMulas2022Signless}
preserves the visual grid structure of the classical two-dimensional lattice:
the nodes are arranged in an $l\times l$ array, and the hyperedges
correspond exactly to the $l$ rows and the $l$ columns of the grid.
Consequently, each node is contained in precisely two hyperedges,
one horizontal and one vertical, so that the \emph{hyperedge-based degree} is
\[
d^{\mathrm{(h)}}(v) = 2,
\quad \text{for all } v \in V,
\]
and, since each hyperedge has cardinality $l$, the corresponding
\emph{clique-degree} is
\[
d^{\mathrm{(cl)}}(v) = 2(l-1).
\]
We now reinterpret this construction in terms of
\emph{directional families of hyperedges},
which will provide a convenient framework for subsequent generalizations.

\begin{definition}[Directional family in $\mathbb{Z}^2_l$]
Let $V = \mathbb{Z}_l \times \mathbb{Z}_l$ be the discrete two-dimensional torus.
A \emph{directional family of hyperedges} is a collection
$\mathcal{F} = \{ e_1, \ldots, e_l \}$ such that:
\begin{enumerate}
    \item all hyperedges in $\mathcal{F}$ are parallel and disjoint,
          and their union covers $V$;
    \item each node belongs to exactly one hyperedge in the family.
\end{enumerate}
Each family thus represents a set of parallel ``lines'' on the torus,
and defines one independent direction of interaction.
\end{definition}
\begin{definition}[$\ell$-hyperlattice in $\mathbb{Z}_\ell^2$]
Let $V = \mathbb{Z}_\ell \times \mathbb{Z}_\ell$ be the discrete two-dimensional torus. 
An \emph{l-hyperlattice} on $V$ is defined as the hypergraph 
$\mathcal{H} = (V, \mathcal{E})$ obtained as the union of two 
directional families of parallel hyperedges:
\[
\mathcal{E} = \mathcal{F}_{\mathrm{row}} \cup \mathcal{F}_{\mathrm{col}},
\qquad 
\mathcal{F}_{\mathrm{row}} = \{ H_{\mathrm{row}}(r) \}_{r=0}^{\ell-1}, 
\quad 
\mathcal{F}_{\mathrm{col}} = \{ H_{\mathrm{col}}(c) \}_{c=0}^{\ell-1},
\]
where
\[
H_{\mathrm{row}}(r) = \{ (r, c) : c \in \mathbb{Z}_\ell \}, 
\qquad 
H_{\mathrm{col}}(c) = \{ (r, c) : r \in \mathbb{Z}_\ell \}.
\]
Each node belongs to exactly one hyperedge from each directional family, 
hence to two hyperedges in total. 
Consequently, the hyperedge-based and clique-based degrees are constant and equal to
\[
d^{(h)}(v) = 2, 
\qquad 
d^{(\mathrm{cl})}(v) = 2(\ell - 1),
\quad \forall v \in V.
\]
\end{definition}



\subsection{$\ell$-hyperlattices in $\mathbb{Z}^m_{\ell}$}
We now extend the construction of the $\ell$-hyperlattice in~$\mathbb{Z}_\ell^2$
to higher dimensions, following the same geometric rationale 
as for the classical $m$-dimensional lattice graph in~$\mathbb{Z}^m$.
\begin{definition}[Directional family in $\mathbb{Z}^m_\ell$]
Let $V = \mathbb{Z}_\ell^m$ be the $m$-dimensional discrete torus.
A \emph{directional family of hyperedges} is a collection
\[
\mathcal{F}
= \{ e_{x_{\hat{i}}} : x_{\hat{i}} \in \mathbb{Z}_\ell^{\,m-1} \},
\]
indexed by the $(m-1)$--tuples $x_{\hat{i}} = (x_1,\dots,x_{i-1},x_{i+1},\dots,x_m)$,
such that
\begin{enumerate}
    \item each hyperedge is the coordinate line
    \[
    e_{x_{\hat{i}}}
    = \{\, (x_1,\dots,x_{i-1},t,x_{i+1},\dots,x_m) : t \in \mathbb{Z}_\ell \,\};
    \]
    \item hyperedges in $\mathcal{F}$ are pairwise disjoint and
          their union equals $V$;
    \item each vertex belongs to exactly one hyperedge of the family.
\end{enumerate}
A directional family thus represents all coordinate lines parallel to axis $i$
in the torus $\mathbb{Z}_\ell^m$.
\end{definition}
\begin{definition}[$\ell$-hyperlattice in $\mathbb{Z}_\ell^m$]
Let $\ell \ge 2$ and $m \ge 1$.  
The \emph{$\ell$-hyperlattice} in the discrete $m$-dimensional torus
\[
V = \mathbb{Z}_\ell^m
= \{ (x_1,\dots,x_m) : x_i \in \{0,\dots,\ell-1\} \}
\]
is the hypergraph $\mathcal{H}_\ell = (V,\mathcal{E})$ obtained as follows.

For each coordinate direction $i \in \{1,\dots,m\}$, consider the directional
family $\mathcal{F}_i$ consisting of all coordinate lines parallel to axis $i$:
\[
\mathcal{F}_i
=\bigl\{
L_i(x_{\hat{i}}) 
= 
\{(x_1,\dots,x_{i-1},t,x_{i+1},\dots,x_m) : t\in\mathbb{Z}_\ell\}
\;:\;
x_{\hat{i}}\in\mathbb{Z}_\ell^{\,m-1}
\bigr\}.
\]
Each hyperedge $L_i(x_{\hat{i}})$ contains exactly $\ell$ vertices and the
hyperedges in $\mathcal{F}_i$ are pairwise disjoint, covering $V$.

The hyperedge set of the $\ell$-hyperlattice is
\[
\mathcal{E}=\bigcup_{i=1}^m \mathcal{F}_i.
\]

Every vertex belongs to exactly one hyperedge in each family $\mathcal{F}_i$,
hence to $m$ hyperedges in total.  
Therefore the degrees are constant:
\[
d^{(h)}(v)=m,
\qquad
d^{(\mathrm{cl})}(v)=m(\ell-1),
\qquad\forall v\in V.
\]
\end{definition}

\begin{remark}
The $\ell$-hyperlattice in $\mathbb{Z}_\ell^m$ is the natural 
$m$-dimensional extension of the $\ell$-hyperlattice in 
$\mathbb{Z}_\ell^2$.  
Each coordinate axis of the discrete torus contributes one 
independent directional family, consisting of all coordinate 
lines parallel to that axis.  
In this way, every vertex belongs to exactly one hyperedge in 
each direction, and the resulting structure forms the Cartesian 
product of $m$ cyclic $\ell$-cycles.  
Thus the $\ell$-hyperlattice preserves the incidence pattern 
of the classical Cartesian lattice on $\mathbb{Z}^m$, while 
replacing pairwise edges with higher-order (multiway) 
interactions along coordinate lines.
\end{remark}
\begin{remark}
In the classical $m$-dimensional Cartesian graph on $\mathbb{Z}_\ell^m$,
each vertex has degree $2m$, since it is connected to two neighbors 
along each coordinate axis.  
In the $\ell$-hyperlattice, each axis contributes instead a single 
coordinate line through the vertex, so the hyperedge-based degree is 
$d^{(h)} = m$.  
\end{remark}

The structure of the 
$\ell$-hyperlattice in three dimensions is illustrated in Figure~\ref{fig:hyperlattice3D}, where the three hyperedges incident to a node are highlighted.
\begin{figure}[t]
    \centering
    \includegraphics[width=0.55\textwidth]{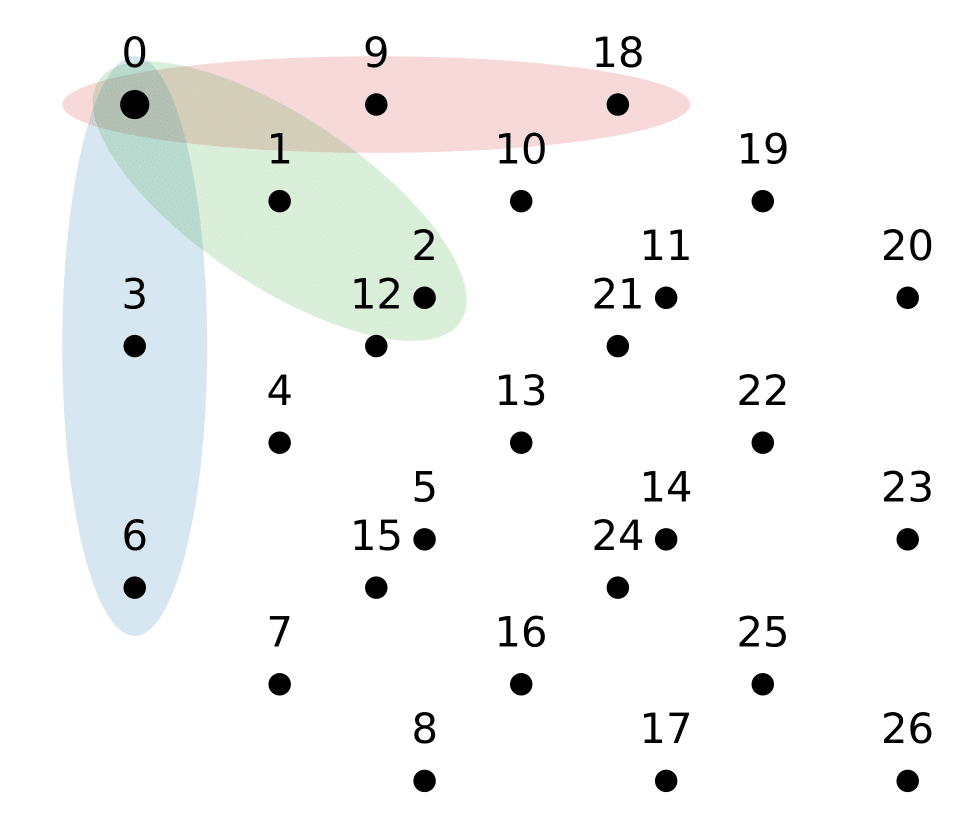}
    \caption{
        $\ell$-hyperlattice in $\mathbb{Z}_\ell^3$ with $\ell=3$.
The three hyperedges adjacent to vertex~0 are shown.
Each hyperedge contains $\ell=3$ nodes aligned along one coordinate axis and is emphasized by a colored elliptical region.
    }
    \label{fig:hyperlattice3D}
\end{figure}

\paragraph{Laplacian operators.}
For the $\ell$-hyperlattice in~$\mathbb{Z}_\ell^m$, 
the clique and hyperedge-based Laplacians take the form
\[
L_{\mathrm{cl}} = m(\ell - 1)I - A_{\mathrm{cl}},
\qquad
L_{h} = mI - \frac{1}{\ell - 1} A_{\mathrm{cl}},
\]
where $A_{\mathrm{cl}}$ denotes the adjacency matrix obtained by
clique expansion of each hyperedge.\\

We now derive the Laplacian spectra of the $\ell$-hyperlattice in~$\mathbb{Z}_\ell^m$.
\begin{proposition}[Spectrum of the $\ell$-hyperlattice in $\mathbb{Z}_\ell^m$]
Let $\mathcal{H}_\ell = (V,E)$ be the $\ell$-hyperlattice on 
$V=\mathbb{Z}_\ell^m$ and let $L_{\mathrm{cl}}$ denote its 
clique Laplacian.  
Then $L_{\mathrm{cl}}$ has exactly $m+1$ distinct eigenvalues
\[
\lambda_j = j\,\ell, 
\qquad j = 0,1,\dots,m,
\]
with multiplicities
\[
\mathrm{mult}(\lambda_j)
= \binom{m}{j}(\ell-1)^j .
\]
Equivalently,
\[
\sigma(L_{\mathrm{cl}})
= \bigcup_{j=0}^m 
\left\{ (j\ell)^{\,\binom{m}{j}(\ell-1)^j}\right\}.
\]

Moreover, the hyperedge-based Laplacian
\[
L_h 
= mI - \frac{1}{\ell-1}A_{\mathrm{cl}}
= \frac{1}{\ell-1}L_{\mathrm{cl}}
\]
has the same eigenvectors and eigenvalues
\[
\mu_j = \frac{j\,\ell}{\ell-1},
\qquad j=0,1,\dots,m,
\]
with the same multiplicities $\binom{m}{j}(\ell-1)^j$.
\end{proposition}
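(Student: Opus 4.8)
The plan is to exhibit $A_{\mathrm{cl}}$ as a Kronecker sum of $m$ identical blocks and then read off the spectrum from the well-known spectrum of a single block. First I would observe that two vertices $x,y \in \mathbb{Z}_\ell^m$ lie in a common hyperedge precisely when they differ in exactly one coordinate: if they agree outside coordinate $i$ they share the line $L_i(x_{\hat i})$, and if they differ in two or more coordinates they share no line. Hence the clique-expanded adjacency matrix has entry $1$ exactly on such pairs, which is exactly the entry pattern of
\[
A_{\mathrm{cl}} = \sum_{i=1}^{m} I_\ell^{\otimes (i-1)} \otimes (J_\ell - I_\ell) \otimes I_\ell^{\otimes (m-i)},
\]
the Kronecker sum of $m$ copies of $B := J_\ell - I_\ell$. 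I would verify this by computing the $(x,y)$ entry of the right-hand side directly: the $i$-th summand is nonzero only when $x_k=y_k$ for all $k \neq i$, in which case it equals $(B)_{x_i y_i}$, so the total is $1$ iff $x$ and $y$ differ in exactly one coordinate, and $0$ otherwise.

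Next I would recall that $B = J_\ell - I_\ell$ has eigenvalue $\ell-1$ with eigenvector $\mathbf 1$ (multiplicity $1$) and eigenvalue $-1$ on the orthogonal complement $\mathbf 1^\perp$ (multiplicity $\ell-1$). By the standard spectral theorem for Kronecker sums, the eigenvalues of $A_{\mathrm{cl}}$ are all sums $\alpha_1 + \cdots + \alpha_m$ with each $\alpha_i \in \{\ell-1, -1\}$, the eigenvectors being the corresponding tensor products of eigenvectors of $B$. Grouping by the number $j$ of indices with $\alpha_i = \ell-1$, each such eigenvalue equals $j(\ell-1) - (m-j) = j\ell - m$, and its eigenspace is spanned by tensors carrying $\mathbf 1$ in the chosen $j$ slots and a basis of $\mathbf 1^\perp$ in the remaining $m-j$ slots; there are $\binom{m}{j}$ choices of slots and $(\ell-1)^{m-j}$ basis tensors for each, giving multiplicity $\binom{m}{j}(\ell-1)^{m-j}$.

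Then I would pass to the Laplacian using $L_{\mathrm{cl}} = m(\ell-1)I - A_{\mathrm{cl}}$, which preserves eigenvectors and sends the $A_{\mathrm{cl}}$-eigenvalue $j\ell - m$ to $m(\ell-1) - (j\ell - m) = (m-j)\ell$. Reindexing by $k := m-j$ yields the claimed eigenvalues $\lambda_k = k\ell$ with multiplicity $\binom{m}{m-k}(\ell-1)^k = \binom{m}{k}(\ell-1)^k$ for $k = 0,\dots,m$, matching the statement; as a consistency check, $\sum_{k=0}^m \binom{m}{k}(\ell-1)^k = \ell^m = |V|$ by the binomial theorem, and $k=0$ recovers the simple eigenvalue $0$ with eigenvector $\mathbf 1$. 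Finally, since $L_h = mI - \tfrac{1}{\ell-1}A_{\mathrm{cl}} = \tfrac{1}{\ell-1}\bigl(m(\ell-1)I - A_{\mathrm{cl}}\bigr) = \tfrac{1}{\ell-1}L_{\mathrm{cl}}$, the two operators share all eigenvectors and the eigenvalues of $L_h$ are those of $L_{\mathrm{cl}}$ divided by $\ell-1$, i.e. $\mu_k = k\ell/(\ell-1)$ with the same multiplicities. The only genuinely delicate point is the bookkeeping in the Kronecker-sum step — correctly matching slot choices to eigenspace dimensions and carrying the reindexing through to the Laplacian — since everything else is a direct application of standard tensor-spectral identities; the binomial identity above is the safeguard that no eigenvalue or multiplicity has been mislabelled.
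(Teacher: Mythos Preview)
Your proof is correct and essentially matches the paper's: both decompose $A_{\mathrm{cl}}$ as a sum of $m$ commuting directional blocks, each a copy of $J_\ell - I_\ell$ with eigenvalues $\ell-1$ (simple) and $-1$ (multiplicity $\ell-1$), and then read off the Laplacian eigenvalues and multiplicities by counting how many coordinates take each value. The only cosmetic difference is that you package this via the Kronecker-sum spectral theorem (exactly as the paper itself does for $m=2$), whereas for general $m$ the paper writes the separable eigenvectors explicitly as discrete Fourier modes $\phi_k(x)=\exp\!\bigl(\tfrac{2\pi i}{\ell}\langle k,x\rangle\bigr)$ and counts by the number of nonzero frequency components $k_i$, which after your reindexing $k=m-j$ is the same count.
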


\begin{proof}
We work in the complex Hilbert space $\mathbb{C}^V$ with the standard inner product and use the discrete Fourier basis on the torus $V=\mathbb{Z}_\ell^m$.
For each frequency vector $k=(k_1,\dots,k_m)\in\mathbb{Z}_\ell^m$,
define
\[
\phi_k(x)
= \exp\!\left(\frac{2\pi i}{\ell}\,\langle k,x\rangle\right),
\qquad x\in V,
\]
where $\langle k,x\rangle = \sum_{i=1}^m k_i x_i$.
The family $\{\phi_k\}_{k\in\mathbb{Z}_\ell^m}$ is an orthogonal basis
of eigenvectors for all translation-invariant operators on $V$.

\smallskip

By definition of the $\ell$-hyperlattice, the clique adjacency matrix
$A_{\mathrm{cl}}$ decomposes as a sum of contributions along each
coordinate axis.  
For a fixed direction $i\in\{1,\dots,m\}$, the directional family
$\mathcal{F}_i$ consists of all coordinate lines parallel to axis $i$,
\[
L_i(x_{\hat{i}})
=
\{ (x_1,\dots,x_{i-1},t,x_{i+1},\dots,x_m) : t\in\mathbb{Z}_\ell\},
\qquad x_{\hat{i}}\in\mathbb{Z}_\ell^{\,m-1}.
\]
On each such line the clique expansion produces a complete graph
$K_\ell$ (with zero diagonal).  
Thus the adjacency contribution $A_i$ of direction $i$ acts on $f:V\to\mathbb{C}$ as
\[
(A_i f)(x)
=
\sum_{\substack{y\in V:\\ y_j=x_j \,\forall j\neq i\\ y_i\neq x_i}} f(y),
\qquad x\in V.
\]

We first compute the action of $A_i$ on a Fourier mode $\phi_k$.
For $x\in V$ we have
\[
(A_i \phi_k)(x)
=
\sum_{\substack{t\in\mathbb{Z}_\ell\\ t\neq x_i}}
\exp\!\left(
\frac{2\pi i}{\ell}
\Big(k_i t + \!\!\sum_{j\neq i} k_j x_j\Big)
\right)
=
\exp\!\left(
\frac{2\pi i}{\ell}
\sum_{j\neq i} k_j x_j
\right)
\sum_{\substack{t\in\mathbb{Z}_\ell\\ t\neq x_i}}
\exp\!\left(
\frac{2\pi i}{\ell} k_i t
\right).
\]
We distinguish two cases.

\smallskip

\emph{Case $k_i=0$.} Then the inner sum does not depend on $x_i$ and
\[
\sum_{\substack{t\in\mathbb{Z}_\ell\\ t\neq x_i}}
\exp\!\left(
\frac{2\pi i}{\ell} k_i t
\right)
=
\sum_{t\in\mathbb{Z}_\ell} 1 - 1
=
\ell-1.
\]
Hence
\[
(A_i \phi_k)(x)
=
(\ell-1)\,
\exp\!\left(
\frac{2\pi i}{\ell}
\sum_{j\neq i} k_j x_j
\right)
=
(\ell-1)\,\phi_k(x).
\]

\emph{Case $k_i\neq 0$.} Using the fact that
$\sum_{t\in\mathbb{Z}_\ell} e^{2\pi i k_i t/\ell}=0$ for $k_i\neq 0$,
we obtain
\[
\sum_{\substack{t\in\mathbb{Z}_\ell\\ t\neq x_i}}
\exp\!\left(
\frac{2\pi i}{\ell} k_i t
\right)
=
-\,\exp\!\left(
\frac{2\pi i}{\ell} k_i x_i
\right),
\]
and therefore
\[
(A_i \phi_k)(x)
=
-\,\exp\!\left(
\frac{2\pi i}{\ell}
\sum_{j=1}^m k_j x_j
\right)
=
-\,\phi_k(x).
\]

In summary, for each direction $i$,
\[
A_i \phi_k
=
\alpha_i(k)\,\phi_k,
\qquad
\alpha_i(k)
=
\begin{cases}
\ell-1, & k_i=0,\\[2pt]
-1, & k_i\neq 0.
\end{cases}
\]

Since the directional contributions are supported on disjoint cliques,
the total clique adjacency is
\[
A_{\mathrm{cl}} = \sum_{i=1}^m A_i,
\]
and $\phi_k$ is an eigenvector of $A_{\mathrm{cl}}$ with eigenvalue
\[
\alpha(k) = \sum_{i=1}^m \alpha_i(k).
\]

Let $j=j(k)$ denote the number of nonzero coordinates of $k$, i.e.
\[
j = \#\{i : k_i\neq 0\},
\qquad
0\le j\le m.
\]
Then there are $m-j$ indices $i$ with $k_i=0$ and $j$ with $k_i\neq 0$, so
\[
\alpha(k)
=
(m-j)(\ell-1) + j(-1)
=
m(\ell-1) - j\ell.
\]

\smallskip

By construction, each vertex has clique degree
$d^{(\mathrm{cl})} = m(\ell-1)$, hence
\[
L_{\mathrm{cl}}
= D_{\mathrm{cl}} - A_{\mathrm{cl}}
= m(\ell-1)I - A_{\mathrm{cl}}.
\]
Therefore $\phi_k$ is also an eigenvector of $L_{\mathrm{cl}}$ with eigenvalue
\[
\lambda(k)
=
m(\ell-1) - \alpha(k)
=
m(\ell-1) - \bigl(m(\ell-1) - j\ell\bigr)
=
j\ell.
\]
Thus the spectrum of $L_{\mathrm{cl}}$ is contained in the set
$\{j\ell : j=0,\dots,m\}$.

It remains to compute the multiplicities.  
For fixed $j\in\{0,\dots,m\}$, the eigenvalue $j\ell$ corresponds
exactly to those $k\in\mathbb{Z}_\ell^m$ with exactly $j$ nonzero
coordinates.  
There are $\binom{m}{j}$ ways to choose the $j$ nonzero positions,
and for each such position one can choose any value in
$\{1,\dots,\ell-1\}$, yielding $(\ell-1)^j$ possibilities.  
Hence
\[
\mathrm{mult}(\lambda_j)
=
\binom{m}{j}(\ell-1)^j,
\qquad
\lambda_j = j\ell,
\quad j=0,\dots,m.
\]
Since
\[
\sum_{j=0}^m \binom{m}{j}(\ell-1)^j
=
(1+\ell-1)^m
=
\ell^m
=
|V|,
\]
these multiplicities account for all eigenvectors, and the
description of $\sigma(L_{\mathrm{cl}})$ is complete.

\smallskip

Finally, by definition of the hyperedge-based Laplacian we have
\[
L_h
=
mI - \frac{1}{\ell-1}A_{\mathrm{cl}}
=
\frac{1}{\ell-1}\bigl(m(\ell-1)I - A_{\mathrm{cl}}\bigr)
=
\frac{1}{\ell-1} L_{\mathrm{cl}}.
\]
Thus $L_h$ shares the same eigenvectors $\phi_k$, and its eigenvalues are
\[
\mu_j = \frac{1}{\ell-1}\lambda_j
= \frac{j\ell}{\ell-1},
\qquad j=0,\dots,m,
\]
with the same multiplicities as above.
\end{proof}

\section{Hyperlattices in $\mathbb{Z}_\ell^m$ with hyperedges of cardinality $r$}\label{sec:2mrhyper}
We now introduce a generalization of the hyperlattice
structures considered in the previous sections.
Here the hyperedge cardinality $r$ is decoupled from the lattice side
$\ell$, and hyperedges encode local interactions along coordinate
directions on a periodic grid.\\
Let $\ell \ge 2$, $m \ge 1$ and $2 \le r \le \ell$. 
We work on the discrete $m$-dimensional torus
\[
V = \mathbb{Z}_\ell^m = 
\{ (x_1,\ldots,x_m) : x_i \in \{0,\ldots,\ell-1\} \},
\]
with indices taken modulo~$\ell$.
\begin{definition}[Hyperlattice in $\mathbb{Z}_\ell^m$ with hyperedges of size $r$]
For each coordinate direction $i \in \{1,\ldots,m\}$ and each offset 
$s \in \{0,\ldots,r-1\}$, we define an \emph{offset class} 
$\mathcal{C}_i^{(s)}$ of contiguous $r$-node hyperedges as follows.

For every choice of $x_{\neq i} \in \mathbb{Z}_\ell^{\,m-1}$ and every 
$a \in \mathbb{Z}_\ell$ with $a \equiv s \ (\mathrm{mod}\ r)$, 
consider the hyperedge
\[
H_i^{(s)}(a; x_{\neq i})
=
\bigl\{ (x_1,\ldots,x_m) \in V : 
x_j = (x_{\neq i})_j \ \text{for } j \neq i,\ 
x_i \in [a, a + r - 1]_\ell \bigr\},
\]
where $[a,a+r-1]_\ell$ denotes $r$ consecutive indices modulo $\ell$.
The class $\mathcal{C}_i^{(s)}$ is the collection of all such hyperedges.

The hyperlattice with hyperedges of size $r$ is the hypergraph
\[
\mathcal{H}_r = (V,\mathcal{E}), 
\qquad
\mathcal{E} = 
\bigcup_{i=1}^m \ \bigcup_{s=0}^{r-1} \mathcal{C}_i^{(s)}.
\]
\end{definition}

\paragraph{Degrees and Laplacian operators.}
By construction, for each direction $i$ and each vertex $v \in V$ there are
exactly $r$ hyperedges in the families $\mathcal{C}_i^{(s)}$ containing $v$
(one for each offset), so that every vertex is incident to
\[
d^{(h)}(v) = mr
\]
hyperedges. 
Since each hyperedge has cardinality $r$, each contributes $r-1$ 
clique-neighbours per vertex, and thus
\[
d^{(\mathrm{cl})}(v) = mr(r-1),
\qquad \forall v \in V.
\]
Accordingly, the clique and hyperedge-based Laplacians are
\[
L_{\mathrm{cl}} = mr(r-1)I - A_{\mathrm{cl}},
\qquad
L_h = mrI - \frac{1}{r-1} A_{\mathrm{cl}}.
\]
\begin{remark}
Along each coordinate direction, the construction generates $r$ 
translation-invariant offset classes of contiguous $r$-node hyperedges,
corresponding to the $r$ possible offsets of the sliding windows.
These classes partition the set of all $r$-windows along that axis.
Different coordinate directions intersect in exactly one vertex for 
each choice of lines, so the resulting hypergraph preserves the 
Cartesian incidence pattern of the classical $m$-dimensional lattice 
graph while encoding higher-order interactions of finite range $r$.
For $r=\ell$ the construction reduces to the $\ell$-hyperlattice
introduced in the previous subsection.
\end{remark}

To illustrate the sliding construction, we decompose the periodic
hyperlattice in~$\mathbb{Z}_\ell^m$ with hyperedges of size~$r$
into its $r$ directional sub-families, 
each corresponding to one possible offset of the sliding windows
along every coordinate axis.
Figure~\ref{fig:2mrhyperlattice} shows the case $m = 2$, $\ell = 6$, and $r = 3$,
where each sub-family (one per column of the figure) contains
the horizontal and vertical hyperedges sharing the same offset.
Every node belongs to exactly two hyperedges in each panel
(one per direction), so that the union of the three sub-families
accounts for all $m r = 6$ hyperedges incident to every node in
the complete hyperlattice.

\begin{figure}[ht]
    \centering
    \includegraphics[width=.32\linewidth]{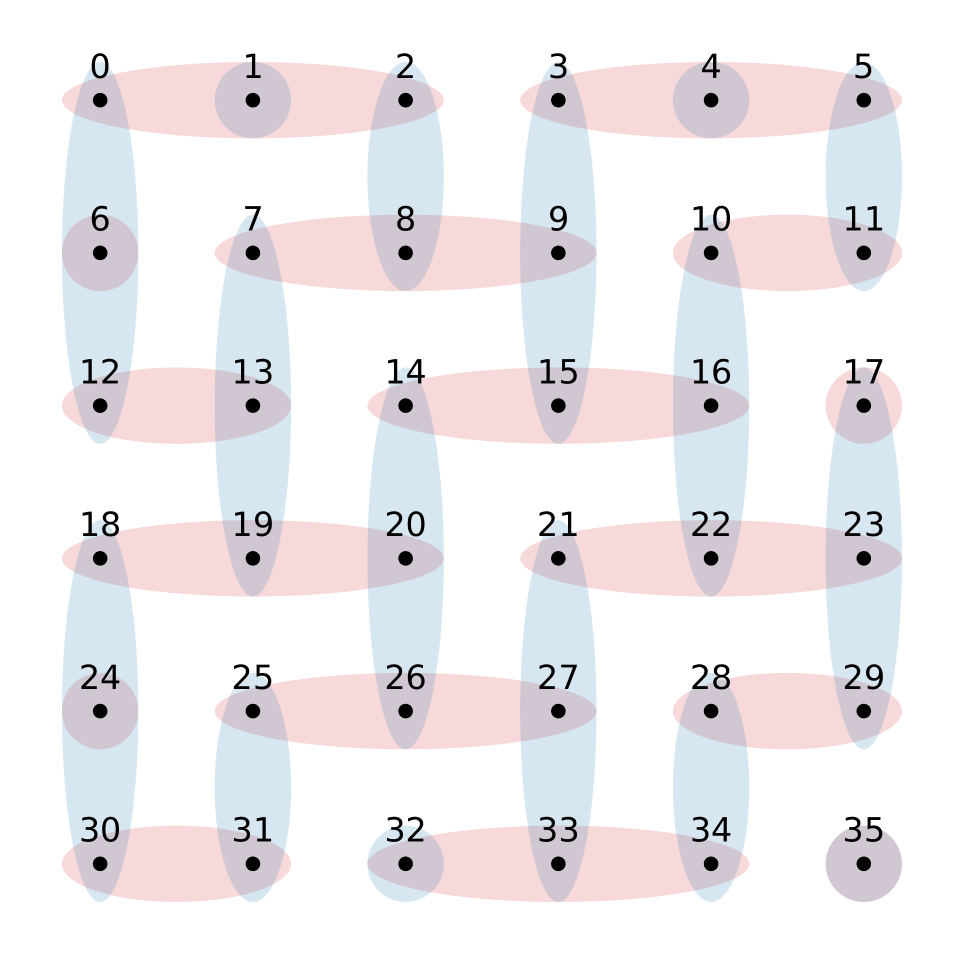}
    \includegraphics[width=.32\linewidth]{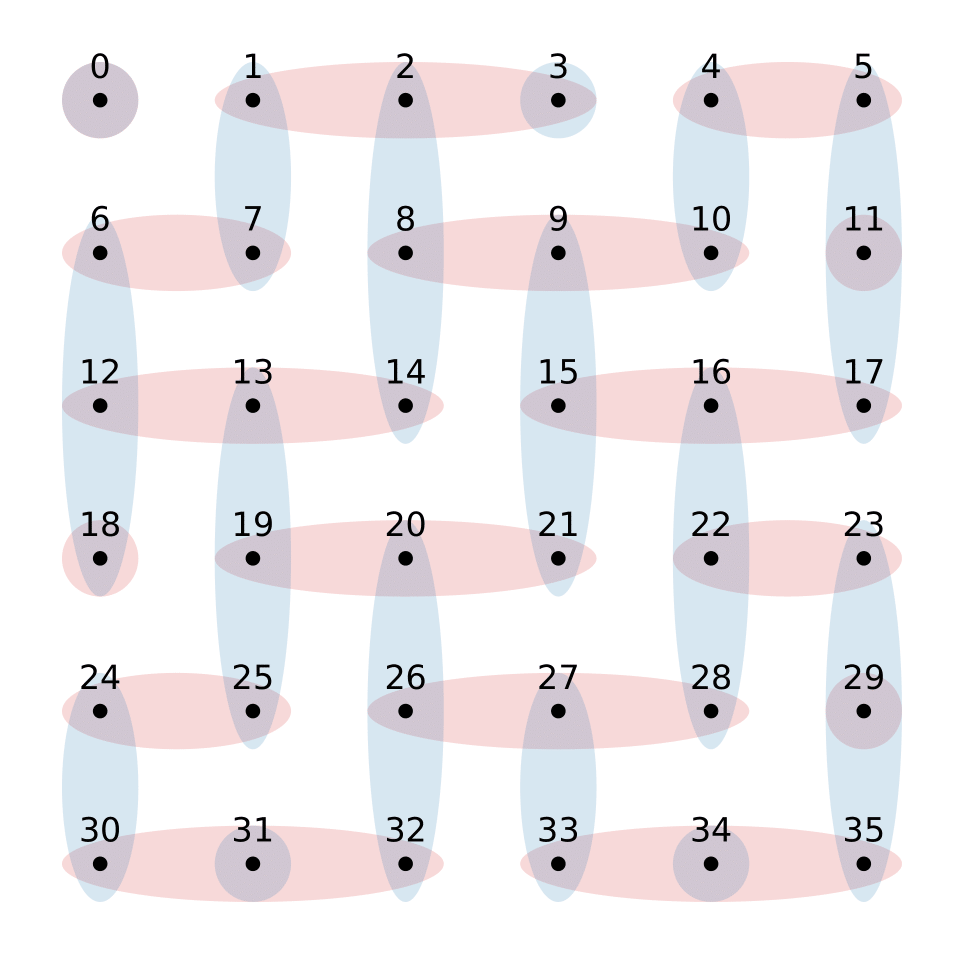}
    \includegraphics[width=.32\linewidth]{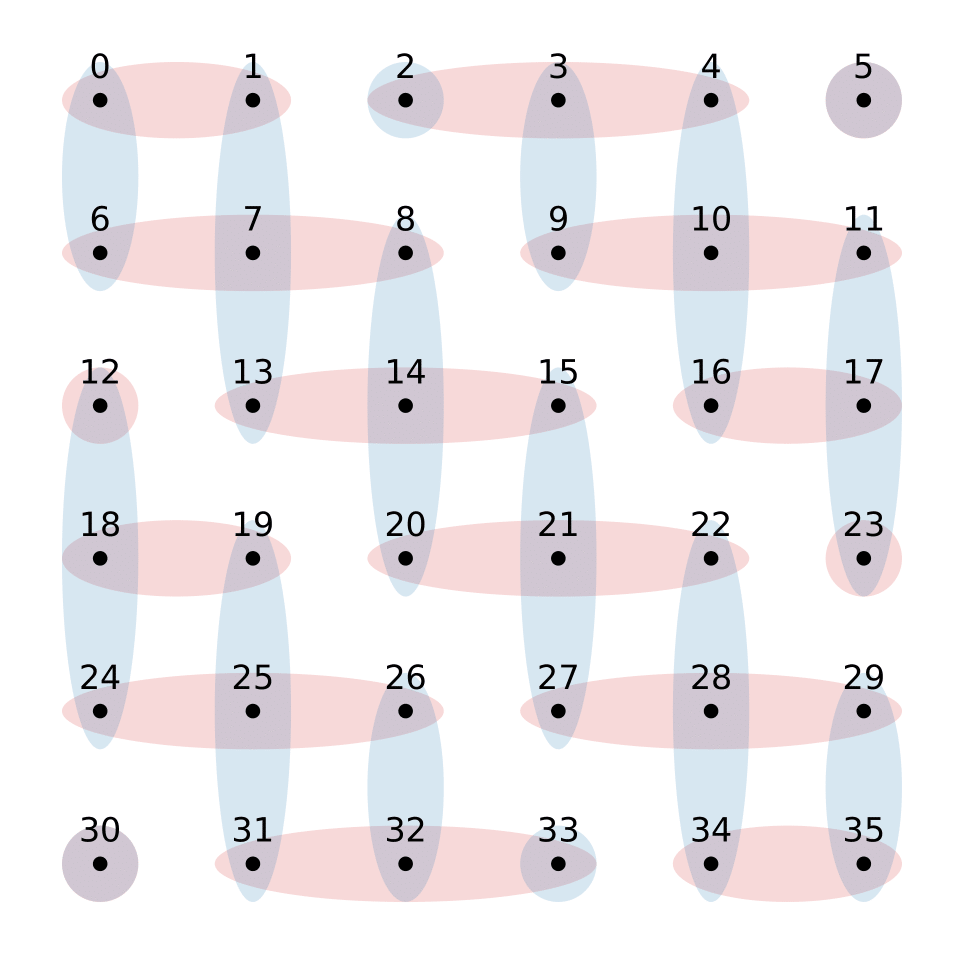}
    \caption{
The three panels illustrate the hyperlattice in~$\mathbb{Z}_\ell^m$
with hyperedges of size~$r$, for $m = 2$, $\ell = 6$, and $r = 3$.
The structure is decomposed into its $r$ directional sub-families,
corresponding to the three possible sliding offsets 
($\mathrm{shift}=0,1,2$).
Each sub-figure displays the horizontal (red) and vertical (blue)
hyperedges sharing the same offset, so that every node belongs
to exactly two hyperedges (one per direction) in each panel.
Together, the three sub-families account for the full set of
$mr = 6$ hyperedges incident to every node in the complete hyperlattice.
}
\label{fig:2mrhyperlattice}
\end{figure}

\begin{theorem}[Spectrum of the hyperlattice in $\mathbb{Z}_\ell^m$ with hyperedges of size $r$]
Let $\mathcal{H}_r = (V,\mathcal{E})$ be the hyperlattice 
on the discrete torus $V = \mathbb{Z}_\ell^m$, 
with hyperedges given by all contiguous $r$-node windows
along each coordinate direction.
Let $A_{\mathrm{cl}}$ be the associated clique adjacency matrix.

Then $A_{\mathrm{cl}}$ is block-circulant and diagonalizable
in the Fourier basis
\[
\varphi_k(x)
= \exp\!\left(\frac{2\pi i}{\ell}\langle k,x\rangle\right),
\qquad
k=(k_1,\dots,k_m)\in\mathbb{Z}_\ell^m.
\]
For each $k$, the corresponding eigenvalue of $A_{\mathrm{cl}}$ is
\[
\alpha(k)
= \sum_{i=1}^m \alpha_{1D}(k_i),
\qquad
\alpha_{1D}(k_i)
= 2\sum_{d=1}^{r-1} (r-d)\,
   \cos\!\left(\frac{2\pi k_i d}{\ell}\right),
\]
with $\alpha_{1D}(0) = r(r-1)$.
Hence the clique and hyperedge-based Laplacian eigenvalues are
\[
\lambda_{\mathrm{cl}}(k)
= mr(r-1) - \alpha(k),
\qquad
\lambda_h(k)
= mr - \frac{\alpha(k)}{r-1}.
\]

Each distinct multi-index $k\in\mathbb{Z}_\ell^m$
corresponds to one eigenmode, so the multiplicity of $\lambda_{\mathrm{cl}}(k)$
is the number of distinct vectors yielding the same value of $\alpha(k)$.
The constant mode $k=0$ gives $\lambda_{\mathrm{cl}}(0)=\lambda_h(0)=0$ with
multiplicity one, confirming that $\mathcal{H}_r$ is connected.
\end{theorem}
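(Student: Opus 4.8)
The plan is to exploit translation invariance on the torus $\mathbb{Z}_\ell^m$ exactly as in the proof of the preceding proposition, the only genuinely new ingredient being a one–dimensional window count. First I would note that every hyperedge is a coordinate line, so two distinct vertices $x,y$ lie in a common hyperedge only when they differ in a single coordinate $i$; hence the clique adjacency splits with no overlap as $A_{\mathrm{cl}}=\sum_{i=1}^m A_i$, where $A_i$ records pairwise co-membership along direction $i$ and acts as the identity on the remaining $m-1$ coordinates, i.e. $A_i$ is (up to the coordinate ordering) $I_\ell\otimes\cdots\otimes C\otimes\cdots\otimes I_\ell$ for a single circulant matrix $C$ on $\mathbb{Z}_\ell$. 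Since $A_{\mathrm{cl}}$ commutes with all torus translations, it is block-circulant and the $\ell^m$ Fourier characters $\varphi_k$ form a simultaneous eigenbasis of $A_1,\dots,A_m$; everything then reduces to identifying the symbol of $C$.

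Next I would compute the entries of $C$. Along a fixed axis the union $\bigcup_{s=0}^{r-1}\mathcal{C}_i^{(s)}$ over offset classes ranges over every contiguous $r$-window, one for each starting position $a\in\mathbb{Z}_\ell$ (each integer $a$ falls in exactly one residue class mod $r$). A window $[a,a+r-1]_\ell$ contains two positions at cyclic displacement $d$ for exactly $r-d$ values of $a$ when $1\le d\le r-1$; combined with the symmetry $c_d=c_{\ell-d}$, this fixes $C$ as the symmetric circulant with weights $c_d=r-d$ for $1\le d\le r-1$ (and the mirrored values near $\ell$), zero elsewhere. Its eigenvalue at frequency $k_i$ is therefore
\[
\alpha_{1D}(k_i)=\sum_{d=1}^{r-1}(r-d)\bigl(\omega^{d}+\omega^{-d}\bigr)=2\sum_{d=1}^{r-1}(r-d)\cos\frac{2\pi k_i d}{\ell},\qquad \omega=e^{2\pi i k_i/\ell},
\]
and at $k_i=0$ this is $2\sum_{d=1}^{r-1}(r-d)=r(r-1)$, as claimed. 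Summing over directions, $\varphi_k$ is an eigenvector of $A_{\mathrm{cl}}$ with eigenvalue $\alpha(k)=\sum_{i=1}^m\alpha_{1D}(k_i)$.

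The Laplacian statements then follow immediately: the construction gives the constant clique degree $d^{(\mathrm{cl})}=mr(r-1)$, so $L_{\mathrm{cl}}=mr(r-1)I-A_{\mathrm{cl}}$ and $L_h=mrI-\tfrac{1}{r-1}A_{\mathrm{cl}}$, whence $\varphi_k$ is an eigenvector of both with eigenvalues $mr(r-1)-\alpha(k)$ and $mr-\alpha(k)/(r-1)$. For the multiplicity bookkeeping I would observe that the $\ell^m$ characters $\varphi_k$ already exhaust $\mathbb{C}^V$, so $\mathrm{mult}(\lambda)=\#\{k:\alpha(k)=mr(r-1)-\lambda\}$; in particular $|\cos|\le 1$ gives $\alpha_{1D}(k_i)\le r(r-1)$ with equality forcing $\cos(2\pi k_i/\ell)=1$, hence $k_i=0$, so the value $\alpha(k)=mr(r-1)$ — equivalently $\lambda_{\mathrm{cl}}=0$ — is attained only at $k=0$, giving multiplicity one and, via Proposition~\ref{prop:spectral_properties}(3), the connectedness of $\mathcal{H}_r$.

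The main obstacle is the one–dimensional window count and its wrap-around behaviour: one must verify that the offset-class decomposition enumerates each $r$-window exactly once and that "contained in $r-d$ windows" is the correct cyclic count. This is clean when $2r\le\ell$; when $2(r-1)\ge\ell$ the two cyclic displacements $d$ and $\ell-d$ can both be short and the two contributions must be added (the symmetric-circulant description above, and the substitution $d\mapsto\ell-d$ in the symbol, already encode this so the formula for $\alpha_{1D}$ remains valid). The degenerate case $r=\ell$, where all windows along a line coincide as a set, should be recorded separately so that the degree normalization and the reduction to the $\ell$-hyperlattice of Section~\ref{sec:2mfamhyper} remain consistent. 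Beyond this, the argument is the same Fourier diagonalization used in the previous section, so no further difficulty is expected.
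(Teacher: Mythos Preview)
Your proposal is correct and follows essentially the same route as the paper: reduce to the one-dimensional circulant via the directional decomposition $A_{\mathrm{cl}}=\sum_i A_i$, identify the circulant weights $c_d=r-d$ by counting sliding windows containing a pair at displacement $d$, diagonalize in the Fourier basis, and sum the one-dimensional symbols. Your treatment of the wrap-around regime $2(r-1)\ge\ell$ is in fact more careful than the paper's, and your equality argument for connectedness (forcing $\cos(2\pi k_i/\ell)=1$ via the $d=1$ term) makes explicit what the paper only asserts; the one addition in the paper you omit is the closed form $\alpha_{1D}(k)=\sin^2(r\pi k/\ell)/\sin^2(\pi k/\ell)-r$ obtained from the Fej\'er/Dirichlet identity, which is convenient but not required for the stated theorem.
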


\begin{proof}
We first analyze the one-dimensional case, $m=1$.
Let $V = \mathbb{Z}_\ell$, and let each hyperedge be a sliding window
of $r$ consecutive vertices,
\[
H_a = \{a, a+1, \dots, a+r-1\}_\ell,
\qquad a \in \mathbb{Z}_\ell.
\]
The clique adjacency matrix $A_{\mathrm{cl}}^{(1D)}$ has entries
\[
(A_{\mathrm{cl}}^{(1D)})_{xy}
= \#\{\,a : x,y \in H_a\,\},
\]
which depend only on the cyclic distance
$d(x,y) = \min\{|x-y|,\,\ell-|x-y|\}$.
For a fixed distance $d \in \{1,\dots,\ell-1\}$,
the number of hyperedges containing both $x$ and $y$
equals the number of windows of length $r$ that cover
two points at distance~$d$, namely
\[
w(d)
= \max\{0,\, r-d\}.
\]
Therefore $A_{\mathrm{cl}}^{(1D)}$ is a real symmetric circulant matrix with
entries
\[
w(0)=0,
\qquad
w(d)=w(\ell-d)=r-d
\quad\text{for }1\le d\le r-1.
\]
The clique-degree in one dimension is
\[
d^{(\mathrm{cl})}_{1D}
= \sum_{d=1}^{\ell-1} w(d)
= 2\sum_{d=1}^{r-1} (r-d)
= r(r-1).
\]

Since $A_{\mathrm{cl}}^{(1D)}$ is circulant, it is diagonalized by the discrete
Fourier basis
\[
\phi_k(x) = \exp\!\left(\frac{2\pi i k x}{\ell}\right),
\qquad k=0,\dots,\ell-1.
\]
The corresponding eigenvalues are
\[
\alpha_{1D}(k)
= \sum_{d=0}^{\ell-1} w(d)
  \exp\!\left(\frac{2\pi i k d}{\ell}\right)
= 2\sum_{d=1}^{r-1} (r-d)
  \cos\!\left(\frac{2\pi k d}{\ell}\right).
\tag{$\ast$}
\]

To obtain a closed form, we use the classical trigonometric identity
(valid for any $r\ge2$ and $\theta\ne0$)
\[
\sum_{d=1}^{r-1} (r-d)\cos(d\theta)
= \frac{\sin^2\!\left(\frac{r\theta}{2}\right)}
       {2\sin^2\!\left(\frac{\theta}{2}\right)}
  - \frac{r}{2}.
\]
Substituting $\theta = \tfrac{2\pi k}{\ell}$ in $(\ast)$ gives
\[
\alpha_{1D}(k)
= 
     \frac{\sin^2\!\left(\frac{r\pi k}{\ell}\right)}
          {\sin^2\!\left(\frac{\pi k}{\ell}\right)}
     - r
   ,
\qquad
k = 0,\dots,\ell-1,
\]
where the value at $k=0$ is understood by continuity as
$\alpha_{1D}(0)=r(r-1)$.

\medskip\noindent
\emph{Multidimensional case.}
For $m>1$, the hyperlattice in $\mathbb{Z}_\ell^m$ with hyperedges of size~$r$
is defined as the union of $m$ independent directional families,
each acting along one coordinate axis.
Let $A_{\mathrm{cl}}^{(i)}$ denote the clique adjacency acting along
direction~$i$.
These operators commute and share the same Fourier eigenbasis
\[
\varphi_k(x)
= \exp\!\left(\frac{2\pi i}{\ell}\langle k,x\rangle\right),
\qquad k=(k_1,\dots,k_m)\in\mathbb{Z}_\ell^m.
\]
Since $A_{\mathrm{cl}}^{(i)}$ acts as the one-dimensional operator along the
$i$th coordinate, we have
\[
A_{\mathrm{cl}}^{(i)}\varphi_k
= \alpha_{1D}(k_i)\,\varphi_k.
\]
Summing over $i=1,\dots,m$ yields
\[
A_{\mathrm{cl}}\varphi_k
= \left(\sum_{i=1}^m \alpha_{1D}(k_i)\right)\varphi_k,
\qquad
\alpha(k)
= \sum_{i=1}^m \alpha_{1D}(k_i).
\]
The clique-degree is $d^{(\mathrm{cl})} = m r(r-1)$, consistent with
$\alpha(0) = d^{(\mathrm{cl})}$.
Therefore the Laplacian eigenvalues are
\[
\lambda_{\mathrm{cl}}(k)
= m r(r-1) - \alpha(k),
\qquad
\lambda_h(k)
= m r - \frac{\alpha(k)}{r-1}.
\]
Since $\alpha_{1D}(k_i) < r(r-1)$ for all $k_i\neq0$,
we have $\alpha(k)<m r(r-1)$ for all $k\neq0$,
so that all non-constant modes have strictly positive eigenvalues.
Thus the constant mode $k=0$ is the unique zero eigenvector, and the
hyperlattice is connected.
\end{proof}

\begin{remark}[Periodic and open-boundary variants]
The construction above assumes periodic boundary conditions,
so that all sliding windows wrap around the domain and every node
participates in exactly $r$ hyperedges per direction.
This periodicity ensures full translation invariance and a uniform
vertex degree across the hyperlattice.

In the non-periodic setting, several open-boundary variants can be defined.
One possibility is to truncate the sliding windows that extend beyond the
domain, thereby allowing boundary hyperedges of reduced cardinality
($1 \le |e| \le r$);
another is to remove such windows altogether, keeping only those fully
contained in the finite region so that all hyperedges have the same size~$r$.
All these non-periodic variants constitute natural finite-domain analogues of
the periodic hyperlattice in~$\mathbb{Z}_\ell^m$ with hyperedges of size~$r$,
and play the same role as open lattices in classical statistical physics.
\end{remark}

To obtain an open-boundary analogue of the hyperlattice whose spectral
structure mirrors that of open lattice graphs in any dimension
(e.g., paths in one dimension, planar grids in two dimensions, and
their higher-dimensional generalizations), 
we seek a construction that preserves the Toeplitz property of the
clique adjacency, so that the eigenvalues can be computed analytically
through trigonometric modes.

In the natural non-periodic models discussed above, this property is
lost because the number of sliding windows containing a given pair of
vertices depends on their position relative to the boundary.
To restore translation invariance and obtain a symmetric Toeplitz
structure, we adopt the following definition.

\begin{definition}[Open Toeplitz hyperlattice]\label{def:toep}
Let $\ell \ge r \ge 2$ and $m \ge 1$.
We define the \emph{open Toeplitz hyperlattice} in the finite domain
$V = \{1,\dots,\ell\}^m$ as the hypergraph
$\mathcal{H}_r^{\mathrm{T}} = (V, \mathcal{E})$
whose hyperedges are obtained as the intersections between the
$r$-node sliding windows on the infinite lattice~$\mathbb{Z}^m$
and the finite vertex domain~$V$, i.e.
\[
\mathcal{E} = 
\bigl\{\, H_i(a; x_{\neq i}) \cap V :
i \in \{1,\dots,m\},\ 
a \in \mathbb{Z},\ 
x_{\neq i} \in \mathbb{Z}_\ell^{m-1}
\,\bigr\},
\]
where
\[
H_i(a; x_{\neq i}) =
\{ (x_1,\dots,x_m) \in \mathbb{Z}^m :
x_j = (x_{\neq i})_j \text{ for } j \neq i,\
x_i \in [a,a+r-1]_\mathbb{Z} \}.
\]
All non-empty intersections are retained, so that near the boundary
some hyperedges may have cardinality $|e| < r$.
This reflective construction ensures that the number of hyperedges
containing a given pair of vertices depends only on their distance
along each coordinate direction.
\end{definition}

In this way, vertices close to the boundary effectively receive the 
same number of hyperedge contributions as those in the interior,
as if the lattice were extended beyond the domain.
This can be interpreted as a \emph{reflective boundary condition} 
that maintains constant pairwise weights along each coordinate axis,
analogous to the transition from periodic to open lattices in
classical graph theory.

Under this definition, the pairwise weights depend only on the 
distance along each coordinate direction, yielding a multi-Toeplitz
adjacency matrix that can be diagonalized by a separable discrete
sine transform, as stated in the following proposition.

\begin{proposition}[Spectrum of the open Toeplitz hyperlattice in $\{1,\dots,\ell\}^m$]
Let $\mathcal{H}_r^{\mathrm{T}} = (V,\mathcal{E})$ be the open Toeplitz hyperlattice
on the finite domain $V = \{1,\dots,\ell\}^m$ as in Definition~\ref{def:toep}.
Let $A_{\mathrm{cl}}$ be its clique adjacency matrix.

Then $A_{\mathrm{cl}}$ can be written as
\[
A_{\mathrm{cl}} = \sum_{i=1}^m A_{\mathrm{cl}}^{(i)},
\]
where each $A_{\mathrm{cl}}^{(i)}$ is a real symmetric Toeplitz operator
acting along the $i$-th coordinate direction with bandwidth $r-1$.
In particular, $A_{\mathrm{cl}}$ is a multi-level Toeplitz matrix and is
diagonalizable in a separable trigonometric basis.

More precisely, let $\alpha_{1D}(k)$ denote the one-dimensional eigenvalues
of the Toeplitz kernel
\[
w(d) =
\begin{cases}
r-d, & 1 \le d \le r-1,\\[2pt]
0, & d \ge r,
\end{cases}
\]
that is
\[
\alpha_{1D}(k)
= 2 \sum_{d=1}^{r-1} (r-d)
    \cos\!\left(\frac{d k \pi}{\ell+1}\right),
\qquad k = 1,\dots,\ell.
\]
For each multi-index $k = (k_1,\dots,k_m) \in \{1,\dots,\ell\}^m$, define
\[
\alpha(k) = \sum_{i=1}^m \alpha_{1D}(k_i).
\]
Then:
\begin{enumerate}
\item The eigenvalues of the clique adjacency matrix are
\[
\spec(A_{\mathrm{cl}})
= \{\, mr(r-1) \,\} \,\cup\,
\{\, \alpha(k) : k \in \{1,\dots,\ell\}^m \},
\]
where $mr(r-1)$ corresponds to the constant eigenvector, and
each $\alpha(k)$ is associated with a separable trigonometric mode.

\item The clique Laplacian
\[
L_{\mathrm{cl}} = mr(r-1)I - A_{\mathrm{cl}}
\]
has eigenvalues
\[
\lambda_{\mathrm{cl}}(k)
= mr(r-1) - \alpha(k),
\qquad k \in \{1,\dots,\ell\}^m,
\]
together with a simple zero eigenvalue corresponding to the constant mode.

\item The hyperedge-based Laplacian
\[
L_h = mrI - \frac{1}{r-1} A_{\mathrm{cl}}
\]
has eigenvalues
\[
\lambda_h(k)
= mr - \frac{1}{r-1}\,\alpha(k),
\qquad k \in \{1,\dots,\ell\}^m,
\]
again with a simple zero eigenvalue obtained from the constant vector.
\end{enumerate}
\end{proposition}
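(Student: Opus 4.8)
The plan is to exploit the separable structure of $A_{\mathrm{cl}}$: decouple the $m$ coordinate directions, diagonalise the resulting one-dimensional block by a discrete sine transform, and recombine by tensor products. Throughout, the identities $L_{\mathrm{cl}} = mr(r-1)I - A_{\mathrm{cl}}$ and $L_h = mrI - \frac{1}{r-1}A_{\mathrm{cl}}$ reduce parts (2) and (3) to part (1) by an affine change of eigenvalue, exactly as in the periodic case.

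First I would write $A_{\mathrm{cl}} = \sum_{i=1}^m A_{\mathrm{cl}}^{(i)}$, where $A_{\mathrm{cl}}^{(i)}$ collects the contributions of the hyperedges running along axis $i$. Two vertices on a common direction-$i$ hyperedge agree in every other coordinate, and --- this is the point of the reflective prescription in Definition~\ref{def:toep} --- the number of length-$r$ windows of $\mathbb{Z}^m$ covering such a pair depends only on the difference of their $i$-th coordinates, not on where the pair sits in $\{1,\dots,\ell\}^m$. Hence, under the identification $\mathbb{C}^V \cong (\mathbb{C}^\ell)^{\otimes m}$, each $A_{\mathrm{cl}}^{(i)}$ acts as $I_\ell \otimes \cdots \otimes B \otimes \cdots \otimes I_\ell$ with a single fixed $\ell \times \ell$ matrix $B$ in the $i$-th slot. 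These summands act on distinct tensor factors, hence commute, so a common eigenbasis of $A_{\mathrm{cl}}$ is obtained from one of $B$ by tensor products, with eigenvalues adding.

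Next I would pin down $B$ and diagonalise it. Its bulk entries are forced: for $x \neq y$ with $|x-y| = d$, exactly $\max\{0, r-d\}$ sliding windows of $\mathbb{Z}$ of length $r$ cover both, so away from the two ends $B$ is the symmetric banded Toeplitz matrix of bandwidth $r-1$ generated by $w(d) = \max\{0, r-d\}$. The delicate point is the boundary: I would show that the reflective truncation of the protruding windows supplies exactly the corner corrections $-w(x+y)$ and $-w\bigl(2(\ell+1)-x-y\bigr)$, placing $B$ in the algebra $\tau$ of matrices with entries $B_{xy} = w(|x-y|) - w(x+y) - w\bigl(2(\ell+1)-x-y\bigr)$ --- equivalently, the commutant of $\mathrm{tridiag}(-1,2,-1)$ --- which is simultaneously diagonalised by the orthogonal DST-I matrix $Q$ with $Q_{jk} = \sqrt{2/(\ell+1)}\,\sin\bigl(jk\pi/(\ell+1)\bigr)$. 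For such $B$, the mode $\psi_k(j) = \sin\bigl(jk\pi/(\ell+1)\bigr)$ carries eigenvalue the symbol of $w$ at the Dirichlet frequency, namely $\alpha_{1D}(k) = 2\sum_{d=1}^{r-1}(r-d)\cos\bigl(dk\pi/(\ell+1)\bigr)$ for $k = 1,\dots,\ell$; this is a two-line check from $\psi_k(j+d) + \psi_k(j-d) = 2\cos\bigl(dk\pi/(\ell+1)\bigr)\psi_k(j)$ together with the fact that the odd reflections of $\psi_k$ about $0$ and $\ell+1$ annihilate the two Hankel corners (and, if desired, $\alpha_{1D}$ can be put in closed form via the Fej\'er-kernel identity already used). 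I expect the verification that the hyperedge counting near the ends reproduces precisely these two Hankel corner terms to be the main obstacle: for bandwidth $r-1 \ge 2$ a plain banded Toeplitz matrix is not DST-diagonalisable, so the whole argument rests on the reflective boundary restoring the $\tau$-structure, which has to be checked entry by entry near each boundary facet of $\{1,\dots,\ell\}^m$.

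Granting this, the products $\Psi_k = \psi_{k_1} \otimes \cdots \otimes \psi_{k_m}$, $k \in \{1,\dots,\ell\}^m$, form an orthogonal eigenbasis of $A_{\mathrm{cl}}$ with eigenvalues $\alpha(k) = \sum_{i=1}^m \alpha_{1D}(k_i)$, which after the two affine substitutions gives the trigonometric part of the spectra in parts (1)--(3). The constant mode I would treat separately: read against $L = D_{\mathrm{cl}} - A_{\mathrm{cl}}$ it contributes the zero eigenvalue of the Laplacian (Proposition~\ref{prop:spectral_properties}), and since $\mathcal{H}_r^{\mathrm{T}}$ is connected (consecutive vertices along each coordinate line share a window, and lines of different directions intersect) that zero eigenvalue is simple; simplicity is also visible from the strict bound $\alpha_{1D}(k) < r(r-1)$ for every $k \in \{1,\dots,\ell\}$, equality holding only at the excluded zero frequency, so no trigonometric mode degenerates with the constant one. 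Assembling the pieces yields the stated spectra.
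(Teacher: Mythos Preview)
Your overall architecture matches the paper's: directional decomposition $A_{\mathrm{cl}}=\sum_i A_{\mathrm{cl}}^{(i)}$ into commuting tensor factors, one-dimensional diagonalisation of the common block $B$, recombination by tensor products, and the affine passage to the two Laplacians. You are in fact more careful than the paper on the key technical point, correctly flagging that a banded symmetric Toeplitz matrix of bandwidth $r-1\ge 2$ is \emph{not} diagonalised exactly by the DST-I, and that the $\tau$-algebra structure (Toeplitz plus the specific Hankel corners you write down) is what one actually needs for sine eigenvectors. The paper's proof simply invokes ``standard spectral theory for real symmetric Toeplitz matrices'' and writes down sine modes, which is only exact for $r=2$.

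The gap is in your proposed resolution. The reflective construction of Definition~\ref{def:toep} does \emph{not} supply the Hankel corners $-w(x+y)$ and $-w\bigl(2(\ell+1)-x-y\bigr)$: for $x\neq y$ in $\{1,\dots,\ell\}$, the number of $r$-windows $[a,a+r-1]\subset\mathbb{Z}$ whose intersection with the domain contains both $x$ and $y$ is exactly $\max\{0,r-|x-y|\}$, independently of where the pair sits, because $x,y$ already lie in $\{1,\dots,\ell\}$ and the truncation is irrelevant to that count. Thus $B$ is the \emph{pure} banded Toeplitz matrix with kernel $w$, with no Hankel correction, and the entry-by-entry boundary check you propose would confirm this rather than the $\tau$-structure. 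The same computation shows the row sums of $B$ are not constant (at the endpoint $x=1$ one gets $\sum_{d=1}^{r-1}(r-d)=r(r-1)/2$, not $r(r-1)$), so $\mathbf{1}$ is not an eigenvector of $A_{\mathrm{cl}}$ either; the paper asserts the contrary without inspecting the boundary rows. In short, the obstacle you single out is genuine, but the mechanism you hope will overcome it is not what the construction actually delivers; making the stated formulae exact would require a boundary convention that forces $B$ into the $\tau$ class, which is not the one literally written in Definition~\ref{def:toep}.
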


\begin{proof}
By Definition~\ref{def:toep}, hyperedges are obtained
by intersecting the $r$-node sliding windows on $\mathbb{Z}^m$
with the finite domain $V = \{1,\dots,\ell\}^m$, along each coordinate
direction $i=1,\dots,m$.
Fix $i$ and a choice of $(x_1,\dots,x_{i-1},x_{i+1},\dots,x_m)$.
Along the $i$-th coordinate, this reduces to the one-dimensional
construction on $\{1,\dots,\ell\}$ with the Toeplitz kernel $w(d)$
introduced above.

Thus the clique adjacency decomposes as
\[
A_{\mathrm{cl}} = \sum_{i=1}^m A_{\mathrm{cl}}^{(i)},
\]
where each $A_{\mathrm{cl}}^{(i)}$ acts non-trivially only along the
$i$-th coordinate, with entries
\[
(A_{\mathrm{cl}}^{(i)})_{xy}
= w\bigl(|x_i - y_i|\bigr)
\prod_{j \neq i} \mathbf{1}_{\{x_j = y_j\}}.
\]
Each $A_{\mathrm{cl}}^{(i)}$ is a symmetric Toeplitz operator of
bandwidth $r-1$ along direction~$i$.

In one dimension, the standard spectral theory for real symmetric
Toeplitz matrices with kernel $w(d)$ shows that $A_{\mathrm{cl}}^{(i)}$
is diagonalized by trigonometric modes with eigenvalues
$\alpha_{1D}(k_i)$ as above.
Since the operators $\{A_{\mathrm{cl}}^{(i)}\}_{i=1}^m$ commute and act
independently on each coordinate, their joint eigenvectors are given by
separable products of one-dimensional modes:
\[
\psi_k(x)
= \prod_{i=1}^m \sin\!\left(\frac{x_i k_i \pi}{\ell+1}\right),
\qquad
k = (k_1,\dots,k_m) \in \{1,\dots,\ell\}^m,
\]
or equivalently by an appropriate discrete cosine/sine basis, depending
on the chosen boundary normalization.
For such a mode we have
\[
A_{\mathrm{cl}}^{(i)} \psi_k = \alpha_{1D}(k_i)\, \psi_k,
\]
and therefore
\[
A_{\mathrm{cl}} \psi_k
= \left( \sum_{i=1}^m \alpha_{1D}(k_i) \right) \psi_k
= \alpha(k)\, \psi_k.
\]

The constant vector $\mathbf{1}$ is also an eigenvector of $A_{\mathrm{cl}}$,
since every row sum is
\[
\sum_{y \in V} (A_{\mathrm{cl}})_{xy}
= \sum_{i=1}^m \sum_{d=1}^{r-1} 2(r-d)
= m r(r-1),
\]
independently of $x$.
Thus $\mathbf{1}$ has eigenvalue $mr(r-1)$.
This yields the full spectrum of $A_{\mathrm{cl}}$ as stated.

Finally, the formulas for the eigenvalues of $L_{\mathrm{cl}}$ and $L_h$
follow directly from the relations
\[
L_{\mathrm{cl}} = mr(r-1)I - A_{\mathrm{cl}},
\qquad
L_h = mrI - \frac{1}{r-1} A_{\mathrm{cl}},
\]
with the constant vector mapped to eigenvalue $0$ in both cases.
\end{proof}
This shows that the open Toeplitz hyperlattice admits a fully explicit,
separable spectral representation, directly comparable to that of discrete
Laplacians on finite paths, grids, and their higher-dimensional analogues.
In contrast, the more natural truncated and removed-window variants lose the
Toeplitz structure at the boundary and can only be analyzed as perturbations
of the periodic case.

\begin{corollary}
For $m=1$ and $r=2$, the open Toeplitz hyperlattice reduces to the path
graph on $\{1,\dots,\ell\}$, and the above formulas recover the classical
eigenvalues $2 - 2\cos\bigl(\frac{k\pi}{\ell+1}\bigr)$.
For $m=2$ and $r=2$, one recovers the standard open grid, with Laplacian
eigenvalues given by the sum of the one-dimensional ones.
\end{corollary}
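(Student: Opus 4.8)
The plan is to obtain both assertions as immediate specializations of the preceding Proposition, once the combinatorial meaning of the case $r=2$ is spelled out. First I would identify the hyperedge family: for $r=2$ the sliding windows on $\mathbb{Z}$ are the pairs $\{a,a+1\}$, and intersecting them with $\{1,\dots,\ell\}$ produces exactly the $\ell-1$ ordinary edges $\{i,i+1\}$ ($1\le i\le \ell-1$) together with the two degenerate singletons $\{1\}$ and $\{\ell\}$ arising from the windows that straddle the boundary. Since the clique expansion of a singleton contains no pair, these degenerate hyperedges leave $A_{\mathrm{cl}}$ unchanged; hence for $m=1$ the matrix $A_{\mathrm{cl}}$ is precisely the adjacency matrix of the path $P_\ell$, i.e.\ the symmetric tridiagonal Toeplitz matrix with zero diagonal and unit off-diagonals, so that the open Toeplitz hyperlattice ``is'' $P_\ell$ as a graph. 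Applying the same observation coordinatewise gives, for general $m$, $A_{\mathrm{cl}}^{(i)}=A_{P_\ell}$ along axis $i$, so that $A_{\mathrm{cl}}$ is the Kronecker-sum adjacency of the $m$-fold Cartesian product $P_\ell^{\square m}$; for $m=2$ this is the open $\ell\times\ell$ grid.

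Next I would substitute $r=2$ into the spectral formula of the Proposition. The Toeplitz kernel collapses to $w(1)=r-1=1$ and $w(d)=0$ for $d\ge 2$, so
\[
\alpha_{1D}(k)=2\sum_{d=1}^{r-1}(r-d)\cos\!\Big(\frac{dk\pi}{\ell+1}\Big)=2\cos\!\Big(\frac{k\pi}{\ell+1}\Big),\qquad k=1,\dots,\ell .
\]
With $mr(r-1)=2m$ the Proposition then gives
\[
\lambda_{\mathrm{cl}}(k_1,\dots,k_m)=2m-\sum_{i=1}^{m}2\cos\!\Big(\frac{k_i\pi}{\ell+1}\Big)=\sum_{i=1}^{m}\Big(2-2\cos\tfrac{k_i\pi}{\ell+1}\Big),
\]
and, since $1/(r-1)=1$ here, $L_h=L_{\mathrm{cl}}$, so both Laplacians have the same eigenvalues. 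For $m=1$ this is exactly the list $2-2\cos(k\pi/(\ell+1))$, $k=1,\dots,\ell$, i.e.\ the classical spectrum of the tridiagonal matrix $\mathrm{tridiag}(-1,2,-1)$ (the discrete one-dimensional Dirichlet Laplacian); for $m=2$ it is the sum of two such one-dimensional eigenvalues, $4-2\cos(k_1\pi/(\ell+1))-2\cos(k_2\pi/(\ell+1))$, which is precisely the open-grid Laplacian spectrum recalled in Section~\ref{sec:lattice}. Because the $\ell^m$ separable modes $\psi_k(x)=\prod_{i=1}^{m}\sin(x_i k_i\pi/(\ell+1))$ already form a complete orthogonal basis, this accounts for the whole spectrum with no residual bookkeeping.

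The main point of care will be the precise sense of ``reduces to the path graph / open grid'': the operator produced by the construction is $L_{\mathrm{cl}}=mr(r-1)I-A_{\mathrm{cl}}=2mI-A_{\mathrm{cl}}$, the Toeplitz (reflective-boundary) Laplacian with \emph{constant} degree $2m$, which is not literally the combinatorial Laplacian $D-A$ of $P_\ell^{\square m}$ (whose boundary vertices have smaller degree); it is exactly this uniform normalization that makes the spectrum come out as the textbook Dirichlet list with $\ell+1$ in the denominator rather than a Neumann-type list with a genuine zero mode. I would state this explicitly, and — for completeness — verify the classical eigenpair $A_{P_\ell}\psi_k=2\cos(k\pi/(\ell+1))\psi_k$ in one line from $\sin((x{-}1)\theta)+\sin((x{+}1)\theta)=2\cos\theta\,\sin(x\theta)$ together with the boundary conditions $\psi_k(0)=\psi_k(\ell+1)=0$. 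Apart from this remark the argument is a direct substitution, so I do not anticipate any genuine obstacle.
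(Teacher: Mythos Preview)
Your argument is correct and matches the paper's intent: the corollary is stated without proof as an immediate specialization of the preceding proposition, and you carry out precisely that substitution, including the identification of the $r=2$ hyperedge family with the path/grid edges. Your closing caveat---that $L_{\mathrm{cl}}=2mI-A_{\mathrm{cl}}$ is the constant-diagonal (Dirichlet-type) operator rather than the combinatorial Laplacian $D-A$ of $P_\ell^{\square m}$, which is what forces the $\ell+1$ denominators and excludes a genuine zero mode among the $\ell^m$ sine modes---is in fact sharper than the paper's own treatment, whose proposition somewhat loosely asserts a constant-vector zero eigenvalue that does not actually survive the open boundary.
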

\begin{remark}
In the special case $r=2$, all hyperedges have size two and the
hyperlattice reduces to a standard graph.
Consequently, the clique and hyperedge-based Laplacians coincide,
recovering the usual graph Laplacian on paths, grids, and their
higher-dimensional generalizations.
For $r>2$, the two operators differ, encoding distinct notions of
higher-order diffusion on the same combinatorial structure.
\end{remark}

\section{Concluding remarks}\label{sec:ising}
In this work we generalized the hypergraph lattice introduced by Andreotti and Mulas~\cite{AndreottiMulas2022Signless} both in its spatial dimension and in the cardinality of its hyperedges, developing a unified framework for higher-order lattice structures on the discrete torus~$\mathbb{Z}^m_\ell$. These {hyperlattices} extend classical graph lattices to the regime of higher-order interactions. For each model in this family we derived in closed form the spectra of the clique-based and the hyperedge-based Laplacians, exploiting translation invariance to obtain fully separable Fourier eigenmodes in the periodic setting. We also introduced a Toeplitz-type open-boundary analogue whose spectrum admits an explicit trigonometric representation, thereby providing a natural higher-order counterpart of the classical spectral formulas for open grids. In this way, several foundational results of spectral graph theory naturally generalize to higher-order structures when multiway interactions preserve a suitable form of directional homogeneity.\\
The spectral structure emerging from these constructions illustrates how the geometry of multiway interactions influences diffusion and collective behaviour. The clique Laplacian reinforces pairwise adjacency induced by hyperedges, whereas the hyperedge-based Laplacian maintains the higher-order incidence pattern. Even in highly regular, isotropic settings these operators yield markedly different diffusion scales, showing that they encode complementary aspects of higher-order connectivity. The explicit formulas obtained for periodic and open hyperlattices clarify the roles played by spatial dimension, number of directional families, and hyperedge cardinality, thereby providing a transparent analytical laboratory for exploring higher-order diffusion processes.\\
In analogy with the role played by regular lattices in the classical Ising model, where translational symmetry enables exact or closed-form characterizations of phase transitions, our hyperlattices provide an analytically tractable substrate for higher-order spin systems. While the higher-order Ising model introduced by Robiglio et al.~\cite{Robiglio2025HigherOrderIsing} is formulated on arbitrary hypergraphs, without assuming any underlying spatial regularity, the quadratic term of its Hamiltonian coincides with the hyperedge-based Laplacian studied here. This positions regular hyperlattices as the natural higher-order counterpart of classical lattices, offering a clean setting in which stability, criticality and relaxation phenomena may be analysed in exact spectral form.\\
More broadly, the hyperlattices introduced in this paper provide an analytically tractable class of higher-order substrates, bridging classical lattice models and the emerging field of higher-order network dynamics. Their explicit spectral characterization paves the way for systematic studies of diffusion, synchronization and transport in multiway systems, and offers a foundation for constructing solvable benchmarks in statistical physics, nonlinear dynamics and machine learning. Future directions include the incorporation of weighted or anisotropic hyperedges, heterogeneous interaction ranges, and nonlinear processes whose linearization reduces to the operators analysed here. Each of these extensions would further shed light on the mathematical structure underlying collective behaviour in systems governed by intrinsically higher-order interactions.

\bibliographystyle{acm}
\bibliography{embeddedbib}
\end{document}